\newtheorem{algorithm}{Algorithm}[section]
\newtheorem{theorem}{Theorem}[section]
\newtheorem{lemma}{Lemma}[section]
\newcommand{\solvername}{\textrm{MRS$^3$}}
\newcommand{\algospace}{\vspace{-0.4cm}}
\newcommand{\tud}{Delft University of Technology,
Faculty of Electrical Engineering, Mathematics and Computer Science,
Delft Institute of Applied Mathematics,
Mekelweg 4, 2628 CD Delft, The Netherlands,
\textbf{e-mail: c.vuik@tudelft.nl}}
\title{A comparison of Krylov methods \\ for Shifted Skew-Symmetric Systems}
\author{R. Idema \and C. Vuik\thanks{\tud}}
\date{\ }
\begin{document}
\maketitle
\setlength{\parindent}{0pt}
\setlength{\parskip}{1.5ex}
\begin{abstract}
It is well known that for general linear systems, only optimal Krylov methods with long recurrences exist.
For special classes of linear systems it is possible to find optimal Krylov methods with short recurrences.
In this paper we consider the important class of linear systems with a shifted skew-symmetric coefficient matrix.
We present the {\solvername} solver, a minimal residual method
that solves these problems using short vector recurrences.
We give an overview of existing Krylov solvers that can be used to solve these problems,
and compare them with the {\solvername} method, both theoretically and by numerical experiments.
From this comparison we argue that the {\solvername} solver is the fastest and most robust
of these Krylov method for systems with a shifted skew-symmetric coefficient matrix.
\end{abstract}
{\bf Keywords:} Lanczos, Krylov, Minimal Residual, Short Recurrences, Shifted Skew-Symmetric
\\[2ex]
{\bf AMS Subject Classification:} 65F10
\section{Introduction}\label{sec:introduction}

In this paper we explore Krylov subspace methods that can solve systems of linear equations of the form
\begin{equation}\label{eq:sss_system}
A\mathbf{x} = \mathbf{b},
\end{equation}
where $A \in \mathbb{R}^{n \times n}$ is a shifted skew-symmetric matrix, i.e.,
\begin{equation}\label{eq:sss_matrix}
A = \alpha I + S,~ \alpha\in\mathbb{R},~ S^T = -S.
\end{equation}
Throughout this paper we will use $I$ for the identity matrix of appropriate size,
$H$ for symmetric matrices, and $S$ for skew-symmetric matrices as above.
Further we will use the abbreviation SSS for shifted skew-symmetric.
Note that SSS matrices are normal, i.e., $AA^T = A^TA$.

Our research on this problem was previously available as a technical report~\cite{idema_vuik:minres_sss}.
Due to the increasing interest in shifted skew-symmetric problems, we decided to now formally publish the work.


Shifted skew-symmetric systems arise in many scientific and engineering applications,
like computational fluid dynamics, linear programming and systems theory.

In computational fluid dynamics, SSS systems arise when dealing with
Navier-Stokes equations with a large~\cite{golub_vanderstraeten:precon_skewsymmsplit}
or a small~\cite{Gol98W} Reynolds number (see also \cite{Bai03GN}).

Consider $B$ to be a large nonsingular matrix,
which is a discrete version of an advection-diffusion problem.
The Hermitian splitting can be used to decompose $B$
in its symmetric part $H$ and its skew-symmetric part $S$:
\[ B = H+S, \textrm{ where } H=\frac{B+B^T}{2} \textrm{ and } S=\frac{B-B^T}{2}. \]

If the diffusion is important, i.e., if the Reynolds number is small,
and if the symmetric part $H$ of $B$ is positive definite,
$H^{-1}$ can be used as a preconditioner to solve a system $B\mathbf{x} = \mathbf{b}$.
Note that to compute $\mathbf{v} = H^{-1} \mathbf{w}$ efficiently, multigrid can be used.
The preconditioning can be done as follows: 
\begin{equation}\label{eq:precond_to_sss}
H^{-\frac12} B H^{-\frac12} \mathbf{y} = H^{-\frac12} \mathbf{b},
\textrm{ where } \mathbf{x} = H^{-\frac12} \mathbf{y}.
\end{equation}
This equation can then be rewritten as
\[ (I + H^{-\frac12} S H^{-\frac12})\mathbf{y} = H^{-\frac12}\mathbf{b}, \]
which is an SSS system (compare~\cite{Gol98W}).

On the other hand if advection is dominant, i.e., if the Reynolds number is large,
\[\left(I-(\alpha I+S)^{-1}(H-\alpha I) \right)(\alpha I+S)^{-1}\]
can be used as a preconditioner (see~\cite{golub_vanderstraeten:precon_skewsymmsplit} eq.~(1.7) and~(3.1)).
Applying this preconditioner to a vector $\mathbf{w}$ implies that SSS systems of the form 
$(\alpha I+S)\mathbf{v} = \mathbf{w}$ have to be solved.

For an application in linear programming consider interior point methods,
a popular way of solving linear programming problems.
When solving a linear program with such a method, using a self-dual embedding of the
problem takes slightly more computational time per iteration
but has several important advantages such as having a centered starting point and
detecting infeasibility by convergence, as described in~\cite{roos:linopt_theoryandalgorithms}.
Therefore, most modern solvers use such an embedding.

Interior point methods are iterative schemes that search
for an optimal solution from within the strictly feasible set.
In each iteration a step $\Delta\mathbf{x}_i$ to add to the current solution is generated.
To calculate this step, a large sparse system has to be solved, that is of the form
\begin{equation}\label{sys:linopt_solveA}
(D_i+S)\Delta\mathbf{x}_i = \mathbf{b}_i,
\end{equation}
where $S$ is a skew-symmetric matrix and $D_i$ is a diagonal matrix
with strictly positive diagonal entries.
Using the same preconditioning as in equation~(\ref{eq:precond_to_sss}),
we can rewrite system~(\ref{sys:linopt_solveA}) as
\begin{equation}\label{sys:linopt_solveB}
(I+D_i^{-\frac12}SD_i^{-\frac12})\mathbf{y}_i = D_i^{-\frac12}\mathbf{b}_i,
\textrm{ where } \mathbf{y}_i = D_i^{\frac12}\Delta\mathbf{x}_i \,.
\end{equation}
This again is an SSS system.
Note that the preconditioning used is in fact diagonal scaling.

In systems theory, shifted skew-symmetric linear systems arise in the discretization of
port-Hamiltonian problems~\cite{beattie_mehrmann_xu_zwart:linear_port_hamiltonian}.


In this paper, we aspire to give an overview of Krylov methods available to solve Shifted Skew-Symmetric systems,
and we present {\solvername}, a Minimal Residual method for SSS systems, designed specifically for solving such systems.
In Section~\ref{sec:overviewexisting} we review existing methods, and their application to SSS systems.
The {\solvername} algorithm is presented in Section~\ref{sec:idemasolver}.
In Section~\ref{sec:solver_comparison} we do a theoretical comparison of the treated methods,
followed in Section~\ref{sec:numerical_experiments} by the results of our numerical experiments.
Finally, in Section~\ref{sec:conclusions} we present our conclusions with respect to solvers for SSS systems.

\section{Overview of existing methods}\label{sec:overviewexisting}

In this section we give an overview of some existing Krylov subspace methods,
that can be used to solve shifted skew-symmetric systems.
In iteration $j$, a Krylov subspace method approximates the solution
with $\mathbf{x}_j=\mathbf{x}_0+\mathbf{s}_j$, where $\mathbf{s}_j\in\mathcal{K}_j(A,\mathbf{r}_0)$.
Here $\mathbf{x}_0$ is the initial solution, $A$ is the coefficient matrix of the system,
$\mathbf{r}_0 = \mathbf{b}-A\mathbf{x}_0$ is the initial residual,
and $\mathcal{K}_j(A,\mathbf{r}_0)$ is the Krylov subspace:
\[ \mathcal{K}_j(A,\mathbf{r}_0) = \textrm{span}\{\mathbf{r}_0, A\mathbf{r}_0, \ldots, A^{j-1}\mathbf{r}_0\}. \]
Since in every iteration the Krylov subspace is expanded, a new approximation
within the larger subspace can be generated that is never worse than the previous one.

Two important properties of Krylov subspace methods are optimality and short recurrences.
An algorithm has the optimality property if the generated approximation for the solution is,
measured in some norm, the best within the current Krylov subspace.
The short recurrences property is satisfied if the algorithm can generate
the next approximation using only data from the last few iterations.

For general coefficient matrices the above properties cannot be attained simultaneously.
However methods satisfying both properties do exist for matrices of the form
\[ A = e^{i\theta}(\sigma I+T) \textrm{, where } \theta\in\mathbb{R},~\sigma\in\mathbb{C},~T^H = T. \]
These results are due to Voevodin~\cite{voevodin:nsa_gen_conj_grad} and
Faber and Manteuffel~\cite{faber_manteuffel:existence_cg},~\cite{faber_manteuffel:orth_error_meth}.
Taking $\theta=\pi/2$, $\sigma=-i\alpha$ with $\alpha \in \mathbb{R}$
and $T=-iS$ with $S^T=-S$, we get an SSS matrix as given in equation~(\ref{eq:sss_matrix}).
This implies that a Krylov subspace method for SSS systems exists,
that has both the optimality property and short recurrences.

\subsection{General methods}\label{subsec:generalmethods}

An SSS system can be solved with any solver for general systems of linear equations.
We will treat a few widely used methods, that nicely illustrate
the findings of Voevodin and Faber and Manteuffel mentioned above.

GMRES~\cite{saad_schultz:gmres} generates optimal approximations to the solution,
but needs vectors from all the previous iterations to do so.
For more details see the remarks about GMRES at the start of Section~\ref{sec:solver_comparison}.

GCR~\cite{eisenstat_elman_schultz:var_iter_meth_nonsymm},~\cite{vdvorst_vuik:gmresr}
also generates optimal approximations, and generally needs vectors from all previous iterations.
However for SSS systems, the orthogonalization can be done with information from the last iteration only.
Truncating the orthogonalization of GCR is commonly known as Truncated GCR,
or Orthomin($k$)~\cite{vinsome:orthomin}.
So, in other words, for SSS systems Orthomin(1) is the same as full GCR,
as is shown in Theorem~\ref{th:gcr_is_orthomin}.
However, there are examples where GCR breaks down, whereas GMRES does not.
For details see Section~\ref{sec:trunc_gcr} below.

Bi-CGSTAB \cite{vdvorst:bicgstab} uses short recurrences but does not have the optimality property.
Usually it converges fast but it is not very robust.

Finally, CGNR \cite{paige_saunders:lsqr} solves the normal equations
$A^TA\mathbf{x}=A^T\mathbf{b}$ with the CG method.
This solver achieves both optimality and short recurrences,
but in a different Krylov subspace, namely $\mathcal{K}_j(A^TA,A^T\mathbf{r}_0)$.
Since the condition number is squared when working with $A^TA$,
convergence can be very slow for ill-conditioned systems.
This method is used for solving SSS systems by Golub and Vanderstraeten
in their treatment of the preconditioning of matrices with a
large skew-symmetric part~\cite{golub_vanderstraeten:precon_skewsymmsplit}.
For a regularization technique using CGNR,
designed specifically to deal with very ill-conditioned skew-symmetric systems,
we refer to~\cite{chen_shen:regularized_cgnr}.

\subsection{Truncated GCR}\label{sec:trunc_gcr}

Below we present the GCR algorithm, truncated after one orthogonalization step,
also known as Orthomin(1).

\begin{figure}[ht]
\hrulefill
\vspace{-.2cm}
\begin{algorithm}[Truncated GCR]\label{algo:truncgcr}~\\
\algospace
\begin{list}{}{}
\item Let $\mathbf{x}_0$ be given, $\mathbf{r}_0 = \mathbf{b}-A\mathbf{x}_0$,
$\mathbf{v}_0=\mathbf{0}$, $j=0$
\item While \emph{not converged} do

    \begin{list}{}{}
    \item $j = j+1$
    \item $\mathbf{s}_j = \mathbf{r}_{j-1}$
      and $\mathbf{v}_j = A\mathbf{r}_{j-1}$

    \item $\mu_j = \left(\mathbf{v}_{j-1},\mathbf{v}_j\right)$
    \item $\mathbf{s}_j = \mathbf{s}_j - \mu_j\mathbf{s}_{j-1}$
      and $\mathbf{v}_j = \mathbf{v}_j - \mu_j\mathbf{v}_{j-1}$

    \item $\beta_j = \left|\left|\mathbf{v}_j\right|\right|_2$
    \item $\mathbf{s}_j = \mathbf{s}_j / \beta_j$
      and $\mathbf{v}_j = \mathbf{v}_j / \beta_j$

    \item $\gamma_j = \left(\mathbf{r}_{j-1},\mathbf{v}_j\right)$
    \item $\mathbf{x}_j = \mathbf{x}_{j-1} + \gamma_j\mathbf{s}_j$
      and $\mathbf{r}_j = \mathbf{r}_{j-1} - \gamma_j\mathbf{v}_j$
    \end{list}

\item Endwhile
\end{list}
\end{algorithm}
\vspace{-.6cm}\hrulefill\vspace{.2cm}
\end{figure}

The full GCR algorithm is the same, except that the orthogonalization step reads
\begin{equation}\label{eq:gcr_full_orthogonalization}
\mathbf{s}_j
= \mathbf{s}_j - \sum_{i=1}^{j-1}
\left(\mathbf{v}_i,\mathbf{v}_j\right)\mathbf{s}_i \textrm{~~and~~}
\mathbf{v}_j
= \mathbf{v}_j - \sum_{i=1}^{j-1}
\left(\mathbf{v}_i,\mathbf{v}_j\right)\mathbf{v}_i.
\end{equation}
Thus the value of $\mathbf{v}_j$ at the end of each iteration is
$\mathbf{v}_j = \frac{1}{\beta_j}\left(A\mathbf{r}_{j-1}
- \sum_{i=1}^{j-1}\left(\mathbf{v}_i,A\mathbf{r}_{j-1}\right)\mathbf{v}_i\right)$,
for full GCR.
We will use $\beta_j\mathbf{v}_j$
to denote the value of $\mathbf{v}_j$ just before the normalization step.

First note that in full GCR $\mathbf{v}_j\perp\mathbf{v}_1,\ldots,\mathbf{v}_{j-1}$ by construction, thus
\begin{equation}\label{eq:gcr_vv_orthogonal}
\left(\mathbf{v}_j,\mathbf{v}_i\right)=0, \textrm{ for all } i<j.
\end{equation}
Further note that
\begin{equation}\label{eq:gcr_rv_orthogonal}
\left(\mathbf{r}_j,\mathbf{v}_i\right)=0, \textrm{ for all } i \leq j,
\end{equation}
because we have
$\left(\mathbf{r}_j,\mathbf{v}_j\right)
=\left(\mathbf{r}_{j-1}-\gamma_j\mathbf{v}_j,\mathbf{v}_j\right)
=\left(\mathbf{r}_{j-1},\mathbf{v}_j\right)-\gamma_j\left(\mathbf{v}_j,\mathbf{v}_j\right)
=\gamma_j-\gamma_j=0$,
and for $i<j$ we find
$\left(\mathbf{r}_j,\mathbf{v}_i\right)
=\left(\mathbf{r}_i-\sum_{k=i+1}^j\gamma_k\mathbf{v}_k,\mathbf{v}_i\right)
=\left(\mathbf{r}_i,\mathbf{v}_i\right)-\sum_{k=i+1}^j\gamma_k\left(\mathbf{v}_k,\mathbf{v}_i\right)=0.$

Below we will present a few properties of non-truncated GCR,
that are often regarded common knowledge in the linear algebra community,
but that we have not been able to find references to their explicit proofs for.
For completeness we have therefore included proofs ourselves.

\begin{lemma}\label{eq:gcr_breakdown}
In GCR, if $\gamma_j=0$ for some $j$, the algorithm breaks down in iteration $j+1$.
\end{lemma}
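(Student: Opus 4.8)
The plan is to exploit the fact that $\gamma_j=0$ stagnates the residual, and that the closed form of the full GCR orthogonalization then forces the raw search direction $A\mathbf{r}_j$ generated in iteration $j+1$ to lie entirely in $\mathrm{span}\{\mathbf{v}_1,\ldots,\mathbf{v}_j\}$, so that the orthogonalization annihilates it and the ensuing normalization divides by $\beta_{j+1}=0$. Throughout I assume, as is implicit in the statement, that the algorithm actually reaches iteration $j+1$, i.e.\ that it did not break down earlier ($\beta_i\neq 0$ for $i\le j$), and that it has not yet converged.

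First I would note that the residual update $\mathbf{r}_j=\mathbf{r}_{j-1}-\gamma_j\mathbf{v}_j$ gives $\mathbf{r}_j=\mathbf{r}_{j-1}$ when $\gamma_j=0$, and hence $A\mathbf{r}_j=A\mathbf{r}_{j-1}$. Next I would use the closed form of $\beta_j\mathbf{v}_j$ for full GCR recorded in the excerpt, $\beta_j\mathbf{v}_j=A\mathbf{r}_{j-1}-\sum_{i=1}^{j-1}\left(\mathbf{v}_i,A\mathbf{r}_{j-1}\right)\mathbf{v}_i$, and rearrange it to $A\mathbf{r}_{j-1}=\beta_j\mathbf{v}_j+\sum_{i=1}^{j-1}\left(\mathbf{v}_i,A\mathbf{r}_{j-1}\right)\mathbf{v}_i$. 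This exhibits $A\mathbf{r}_{j-1}$, and therefore $A\mathbf{r}_j$, as an element of $\mathrm{span}\{\mathbf{v}_1,\ldots,\mathbf{v}_j\}$.

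Finally, in iteration $j+1$ the algorithm sets $\mathbf{v}_{j+1}=A\mathbf{r}_j$ and applies the full orthogonalization step (\ref{eq:gcr_full_orthogonalization}), subtracting $\sum_{i=1}^{j}\left(\mathbf{v}_i,A\mathbf{r}_j\right)\mathbf{v}_i$. By (\ref{eq:gcr_vv_orthogonal}) together with the normalization steps, $\mathbf{v}_1,\ldots,\mathbf{v}_j$ are orthonormal, and subtracting from a vector its full expansion in an orthonormal basis of a subspace that already contains it yields $\mathbf{0}$; hence $\beta_{j+1}\mathbf{v}_{j+1}=A\mathbf{r}_j-\sum_{i=1}^{j}\left(\mathbf{v}_i,A\mathbf{r}_j\right)\mathbf{v}_i=\mathbf{0}$, so $\beta_{j+1}=\|\beta_{j+1}\mathbf{v}_{j+1}\|_2=0$ and the step $\mathbf{v}_{j+1}=\mathbf{v}_{j+1}/\beta_{j+1}$ divides by zero. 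I do not anticipate a genuine obstacle here; the only points requiring care are citing (\ref{eq:gcr_vv_orthogonal}) explicitly so that the orthogonal-projection argument is legitimate, and noting the bookkeeping convention that iteration $j$ itself completes normally (it merely leaves $\mathbf{x}_j=\mathbf{x}_{j-1}$), so that the failure is correctly attributed to iteration $j+1$.
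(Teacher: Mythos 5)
Your proof is correct and follows essentially the same route as the paper: both hinge on the observation that $\gamma_j=0$ gives $\mathbf{r}_j=\mathbf{r}_{j-1}$, and that the rearranged closed form $A\mathbf{r}_{j-1}=\beta_j\mathbf{v}_j+\sum_{i=1}^{j-1}\left(\mathbf{v}_i,A\mathbf{r}_{j-1}\right)\mathbf{v}_i$ places $A\mathbf{r}_j$ inside $\mathrm{span}\{\mathbf{v}_1,\ldots,\mathbf{v}_j\}$, forcing $\beta_{j+1}=0$. The only difference is presentational: the paper cancels the terms explicitly (computing $\left(\mathbf{v}_j,A\mathbf{r}_{j-1}\right)\mathbf{v}_j=\beta_j\mathbf{v}_j$ by hand), whereas you invoke the orthonormal-projection argument to reach the same conclusion in one step.
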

\begin{proof}
If $\gamma_j=0$ then $\mathbf{r}_j=\mathbf{r}_{j-1}$.
Therefore we can write
\[ \beta_{j+1}\mathbf{v}_{j+1}
= A\mathbf{r}_j - \sum_{i=1}^j \left(\mathbf{v}_i,A\mathbf{r}_j\right)\mathbf{v}_i \\
= A\mathbf{r}_{j-1} - \sum_{i=1}^{j-1} \left(\mathbf{v}_i,A\mathbf{r}_{j-1}\right)\mathbf{v}_i
- \left(\mathbf{v}_j,A\mathbf{r}_{j-1}\right)\mathbf{v}_j. \]
The first two terms of the right hand side together are equal to $\beta_j\mathbf{v}_j$,
whereas the last term we can rewrite using that
$A\mathbf{r}_{j-1} = \beta_j\mathbf{v}_j
+ \sum_{i=1}^{j-1}\left(\mathbf{v}_i,A\mathbf{r}_{j-1}\right)\mathbf{v}_i$,
and the orthogonality relation~(\ref{eq:gcr_vv_orthogonal}):
\[ \left(\mathbf{v}_j,A\mathbf{r}_{j-1}\right)\mathbf{v}_j
= \left(\mathbf{v}_j,\beta_j\mathbf{v}_j
+ \sum_{i=1}^{j-1}\left(\mathbf{v}_i,A\mathbf{r}_{j-1}\right)\mathbf{v}_i\right)\mathbf{v}_j
= \beta_j\left(\mathbf{v}_j,\mathbf{v}_j\right)\mathbf{v}_j
= \beta_j\mathbf{v}_j. \]
Thus we find that $\beta_{j+1}\mathbf{v}_{j+1} = \beta_j\mathbf{v}_j-\beta_j\mathbf{v}_j=0$.
Then the normalization factor $\beta_{j+1}=0$,
and the algorithm breaks down on the calculation of $\mathbf{v}_{j+1}=\frac{\mathbf{0}}{0}$.
\end{proof}

\begin{theorem}\label{th:gcr_breakdown_skewsymm}
When GCR is applied to a system with skew-symmetric coefficient matrix $S$,
the algorithm breaks down in the second iteration.
\end{theorem}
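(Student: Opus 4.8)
The plan is to reduce everything to the breakdown criterion already established in Lemma~\ref{eq:gcr_breakdown}: it suffices to show that $\gamma_1 = 0$ whenever GCR is run on a system with coefficient matrix $A = S$, $S^T = -S$. The starting point is the elementary observation that a skew-symmetric matrix defines a bilinear form that vanishes on the diagonal, i.e.\ $\mathbf{x}^T S \mathbf{x} = 0$ for every $\mathbf{x}\in\mathbb{R}^n$; this is immediate from $\mathbf{x}^T S \mathbf{x} = (\mathbf{x}^T S \mathbf{x})^T = \mathbf{x}^T S^T \mathbf{x} = -\mathbf{x}^T S \mathbf{x}$.

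Next I would trace through the first iteration of the GCR algorithm (which agrees with Algorithm~\ref{algo:truncgcr} in the first iteration). One sets $\mathbf{s}_1 = \mathbf{r}_0$ and $\mathbf{v}_1 = A\mathbf{r}_0 = S\mathbf{r}_0$; since $\mathbf{v}_0 = \mathbf{0}$ the coefficient $\mu_1$ is zero, so the orthogonalization step does nothing, and after normalization $\mathbf{v}_1 = S\mathbf{r}_0 / \beta_1$ with $\beta_1 = \|S\mathbf{r}_0\|_2$. Hence
\[ \gamma_1 = \left(\mathbf{r}_0,\mathbf{v}_1\right) = \frac{1}{\beta_1}\left(\mathbf{r}_0, S\mathbf{r}_0\right) = \frac{1}{\beta_1}\,\mathbf{r}_0^T S \mathbf{r}_0 = 0 \]
by the observation above. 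Applying Lemma~\ref{eq:gcr_breakdown} with $j = 1$ then gives that GCR breaks down in iteration $2$, which is the claim.

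The only point that needs a little care is whether the first iteration even completes, since Lemma~\ref{eq:gcr_breakdown} presupposes that $\mathbf{v}_1$ is well defined. This can fail only when $\beta_1 = \|S\mathbf{r}_0\|_2 = 0$, i.e.\ $S\mathbf{r}_0 = \mathbf{0}$; in that degenerate situation GCR already breaks down in the \emph{first} iteration, on the normalization $\mathbf{v}_1 = \mathbf{0}/0$, so the conclusion that GCR does not get past the second iteration holds a fortiori. In the generic case $S\mathbf{r}_0 \neq \mathbf{0}$ the argument above applies verbatim. I do not expect any real obstacle here: the entire content of the theorem is carried by the scalar identity $\mathbf{r}_0^T S \mathbf{r}_0 = 0$ together with the already-proved lemma on breakdown.
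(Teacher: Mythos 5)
Your proof is correct and follows essentially the same route as the paper: establish $\gamma_1 = (\mathbf{r}_0, \tfrac{1}{\beta_1}S\mathbf{r}_0) = 0$ from the skew-symmetry identity $\mathbf{z}^T S\mathbf{z} = 0$, then invoke Lemma~\ref{eq:gcr_breakdown}. Your extra remark handling the degenerate case $S\mathbf{r}_0 = \mathbf{0}$ is a small refinement the paper omits, but it does not change the argument.
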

\begin{proof}
Using that $\mathbf{z}^TS\mathbf{z}=0$ for any vector $\mathbf{z}$, we find
$\gamma_1 = \left(\mathbf{r}_0,\mathbf{v}_1\right)
= \left(\mathbf{r}_0,\frac{1}{\beta_1}S\mathbf{r}_0\right) = 0$.
Thus the statement follows readily from Lemma~\ref{eq:gcr_breakdown}.
\end{proof}

As mentioned before, for SSS systems Algorithm~\ref{algo:truncgcr}
gives the same iterates as full GCR.
The following theorem proves this fact,
by showing that the coefficients of all those orthogonalization factors
that are omitted in Algorithm~\ref{algo:truncgcr}, are equal to 0 for SSS systems.

\begin{theorem}\label{th:gcr_is_orthomin}
When GCR is applied to a system with shifted skew-symmetric coefficient matrix $A=\alpha I+S$, then
\[ \left(\mathbf{v}_i,A\mathbf{r}_j\right) = 0, \textrm{ for all~}i<j. \]
\end{theorem}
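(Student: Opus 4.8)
The plan is to exploit the orthogonality relations already available for GCR together with the special structure $A = \alpha I + S$ with $S^T = -S$. The key observation is that multiplying by $A$ versus by $A^T = 2\alpha I - A$ differs only by a multiple of the identity, so $A^T\mathbf{v}_i$ lies in the span of $\mathbf{v}_i$ and $A\mathbf{v}_i$. If I can show that $A\mathbf{v}_i \in \mathrm{span}\{\mathbf{v}_1,\ldots,\mathbf{v}_{i+1}\}$, then $(\mathbf{v}_i, A\mathbf{r}_j) = (A^T\mathbf{v}_i, \mathbf{r}_j)$ involves only $\mathbf{r}_j$ paired against $\mathbf{v}_k$ for $k \le i+1 \le j$, and (\ref{eq:gcr_rv_orthogonal}) kills all of these terms.

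First I would establish, by induction on $j$, the claim that for all $j$ we have $A\mathbf{r}_{j-1} \in \mathrm{span}\{\mathbf{v}_1,\ldots,\mathbf{v}_j\}$ and hence $\mathbf{v}_j \in \mathrm{span}\{\mathbf{v}_1,\ldots,\mathbf{v}_j\}$, which is immediate from the orthogonalization formula $\beta_j\mathbf{v}_j = A\mathbf{r}_{j-1} - \sum_{i=1}^{j-1}(\mathbf{v}_i, A\mathbf{r}_{j-1})\mathbf{v}_i$ together with the standard Krylov fact that $\mathbf{v}_j$ spans the same space as the Krylov basis vectors, so that $A\mathbf{v}_i$ and $A\mathbf{r}_{i-1}$ both live in $\mathcal{K}_{i+1}(A,\mathbf{r}_0) = \mathrm{span}\{\mathbf{v}_1,\ldots,\mathbf{v}_{i+1}\}$. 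Next, using $A + A^T = 2\alpha I$, I rewrite
\[
\left(\mathbf{v}_i, A\mathbf{r}_j\right) = \left(A^T\mathbf{v}_i, \mathbf{r}_j\right) = \left(2\alpha\mathbf{v}_i - A\mathbf{v}_i, \mathbf{r}_j\right) = 2\alpha\left(\mathbf{v}_i, \mathbf{r}_j\right) - \left(A\mathbf{v}_i, \mathbf{r}_j\right).
\]
By (\ref{eq:gcr_rv_orthogonal}) the first inner product vanishes whenever $i \le j$, in particular for $i < j$. For the second term, $A\mathbf{v}_i \in \mathrm{span}\{\mathbf{v}_1,\ldots,\mathbf{v}_{i+1}\}$, and since $i < j$ we have $i+1 \le j$, so again by (\ref{eq:gcr_rv_orthogonal}) each $(\mathbf{v}_k, \mathbf{r}_j) = 0$ for $k \le i+1 \le j$. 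Hence $(\mathbf{v}_i, A\mathbf{r}_j) = 0$.

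The main obstacle to watch is the bookkeeping on subspace indices: I must be careful that $A\mathbf{v}_i$ genuinely lands in $\mathrm{span}\{\mathbf{v}_1,\ldots,\mathbf{v}_{i+1}\}$ and not a larger span, which relies on the fact that $\{\mathbf{v}_1,\ldots,\mathbf{v}_k\}$ forms a basis of $\mathcal{K}_k(A,\mathbf{r}_0)$ for each $k$ — a fact that holds as long as the algorithm has not broken down before iteration $j$, so the statement is implicitly conditioned on reaching iteration $j$. A small additional check is the degenerate case where $A\mathbf{r}_{j-1}$ already lies in the previous span (a near-breakdown), but that only makes the relevant spans smaller, so the argument still goes through. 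Everything else is routine linear algebra.
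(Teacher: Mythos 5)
Your proof is correct and its first half coincides with the paper's: both reduce $\left(\mathbf{v}_i,A\mathbf{r}_j\right)$ to $-\left(A\mathbf{v}_i,\mathbf{r}_j\right)$ by exploiting $A+A^T=2\alpha I$ together with $\left(\mathbf{v}_i,\mathbf{r}_j\right)=0$ from~(\ref{eq:gcr_rv_orthogonal}). You diverge only in how you dispose of the remaining term: the paper substitutes $\mathbf{v}_i=\frac{1}{\gamma_i}\left(\mathbf{r}_{i-1}-\mathbf{r}_i\right)$ and then notes that each $A\mathbf{r}_k$ lies in $\mathrm{span}\{\mathbf{v}_1,\ldots,\mathbf{v}_{k+1}\}$ directly from the orthogonalization formula~(\ref{eq:gcr_full_orthogonalization}); this needs $\gamma_i\neq 0$, which is harmless since $\gamma_i=0$ forces breakdown by Lemma~\ref{eq:gcr_breakdown}. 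You instead show $A\mathbf{v}_i\in\mathrm{span}\{\mathbf{v}_1,\ldots,\mathbf{v}_{i+1}\}$ via a Krylov-subspace argument, which avoids the division by $\gamma_i$ — a marginal gain in the same overall strategy. One slip you should fix: $\mathrm{span}\{\mathbf{v}_1,\ldots,\mathbf{v}_{k}\}$ equals $A\mathcal{K}_{k}(A,\mathbf{r}_0)$, not $\mathcal{K}_{k}(A,\mathbf{r}_0)$ (already $\mathbf{v}_1$ is proportional to $A\mathbf{r}_0$, which in general does not lie in $\mathrm{span}\{\mathbf{r}_0\}$). The inclusion you actually need survives the correction, since $\mathbf{v}_i\in A\mathcal{K}_i(A,\mathbf{r}_0)\subseteq\mathcal{K}_{i+1}(A,\mathbf{r}_0)$ gives $A\mathbf{v}_i\in A\mathcal{K}_{i+1}(A,\mathbf{r}_0)=\mathrm{span}\{\mathbf{v}_1,\ldots,\mathbf{v}_{i+1}\}$, so your argument stands once the spans are labelled correctly.
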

\begin{proof}
Using that $A=\alpha I+S$ we can write
\begin{equation}\label{eq:gcr_sss_trunc1}
\left(\mathbf{v}_i,A\mathbf{r}_j\right) =
\alpha\left(\mathbf{v}_i,\mathbf{r}_j\right) + \left(\mathbf{v}_i,S\mathbf{r}_j\right) =
-\alpha\left(\mathbf{v}_i,\mathbf{r}_j\right)- \left(S\mathbf{v}_i,\mathbf{r}_j\right) =
-\left(A\mathbf{v}_i,\mathbf{r}_j\right).
\end{equation}
Note that in the second equality we used the fact that
$\left(\mathbf{v}_i,\mathbf{r}_j\right)=0$, see equation~(\ref{eq:gcr_rv_orthogonal}).

Next, rewriting $\mathbf{v}_i$ using the residual update expression
$\mathbf{r}_i = \mathbf{r}_{i-1} - \gamma_i\mathbf{v}_i$, we have
\begin{equation}\label{eq:gcr_sss_trunc2}
\left(A\mathbf{v}_i,\mathbf{r}_j\right) =
\left(A \frac{1}{\gamma_i}\left(\mathbf{r}_{i-1}-\mathbf{r}_i\right), \mathbf{r}_j\right) =
\frac{1}{\gamma_i}\left(A\mathbf{r}_{i-1}, \mathbf{r}_j\right) -
\frac{1}{\gamma_i}\left(A\mathbf{r}_i    , \mathbf{r}_j\right).
\end{equation}
Note that we can assume $\gamma_i\neq 0$,
as otherwise the algorithm would have broken down.

Finally, we will use that
\begin{equation}\label{eq:gcr_sss_trunc3}
\left(A\mathbf{r}_i,\mathbf{r}_j\right)=0, \textrm{ for all } i<j,
\end{equation}
which follows from the fact that using the orthogonalization
formula~(\ref{eq:gcr_full_orthogonalization}) for $\mathbf{v}_j$,
we can write $A\mathbf{r}_i$ as a linear combination of $\mathbf{v}_1,\ldots,\mathbf{v}_{i+1}$,
which are all orthogonal to $\mathbf{r}_j$ due to relation~(\ref{eq:gcr_rv_orthogonal}).

Combining equations~(\ref{eq:gcr_sss_trunc1}), (\ref{eq:gcr_sss_trunc2}),
and~(\ref{eq:gcr_sss_trunc3}) it follows that
$\left(\mathbf{v}_i,A\mathbf{r}_j\right)=0$ for all $i<j$.
\end{proof}

\subsection{Generalized Conjugate Gradient method}

The Generalized Conjugate Gradient method,
proposed by Concus and Golub~\cite{concus_golub:gencg_nonsymm}
and Widlund~\cite{widlund:lanczos_nonsymm},
is an iterative Lanczos method for solving systems $A\mathbf{x}=\mathbf{b}$
where $A$ has a positive definite symmetric part $H$.
The SSS matrix~(\ref{eq:sss_matrix}) satisfies this requirement if $\alpha>0$,
and for $\alpha<0$ we can easily meet it by solving $-A\mathbf{x}=-\mathbf{b}$.
Thus we can use this method to solve any SSS system with $\alpha\neq 0$.

\begin{figure}[ht]
\hrulefill
\vspace{-.2cm}
\begin{algorithm}[Generalized Conjugate Gradient]\label{algo:gencg}~\\
\algospace
\begin{list}{}{}
\item Let $\mathbf{x}_{-1}=\mathbf{x}_0=0\,,~j=0$
\item While \emph{not converged} do

    \begin{list}{}{}
    \item Solve $H\mathbf{v}_j = \mathbf{b} - A\mathbf{x}_j$
    \item $\rho_j = (H\mathbf{v}_{j},\mathbf{v}_j)$

    \item If $j=0$
        \begin{list}{}{} \item $\omega_j=1$ \end{list}
    \item Else
        \begin{list}{}{} \item $\omega_j=(1+(\rho_j/\rho_{j-1})/\omega_{j-1})^{-1}$ \end{list}
    \item Endif

    \item $\mathbf{x}_{j+1} = \mathbf{x}_{j-1} + \omega_j(\mathbf{v}_j + \mathbf{x}_j - \mathbf{x}_{j-1})$
    \item $j=j+1$
    \end{list}

\item Endwhile
\end{list}
\end{algorithm}
\vspace{-.6cm}\hrulefill\vspace{.2cm}
\end{figure}

The Generalized Conjugate Gradient method does not have the optimality property.
However, it has been proved that the iterates are optimal in some affine subspace
other than the Krylov subspace~\cite{eisenstat:note_genconjgrad}.
In practice this method is rarely used, as it has been superseded by the CGW method by the same authors.
Therefore we will not go into any further details on
the Generalized Conjugate Gradient method in this paper.

\subsection{Concus, Golub, Widlund method}\label{sec:CGW}

Related to the Generalized Conjugate Gradient method presented above,
is the method by Concus, Golub and Widlund (CGW)
described in Section~9.6 of~\cite{saad:itmeth_sparselinsys}.
This method also solves systems of linear equations with
a coefficient matrix with positive definite symmetric part.
But it does so using a two-term recursion, as opposed to
the three-term recursion used by the Generalized Conjugate Gradient method.

Below we present the CGW algorithm.
Therein $H$ is again the symmetric part of $A$.
The algorithm is identical to the preconditioned CG method,
except for the minus sign used in the calculation of
$\beta_j$ (see~\cite{saad:itmeth_sparselinsys}, Section~9.2).
The CGW method can be used to solve
SSS systems~{(\ref{eq:sss_system})},~{(\ref{eq:sss_matrix})} with $\alpha\neq 0$.
Note that in this case $H=\alpha I$, and we can eliminate $\mathbf{z}_j$
by substituting $\mathbf{z}_j = \frac{1}{\alpha}\mathbf{r}_j$,
thus simplifying the algorithm.

\begin{figure}[ht]
\hrulefill
\vspace{-.2cm}
\begin{algorithm}[CGW]\label{algo:cgw} ~\\
\algospace
\begin{list}{}{}
\item Let $\mathbf{x}_0$ be given, $\mathbf{r}_0 = \mathbf{b}-A\mathbf{x}_0$,
Solve $H\mathbf{z}_0 = \mathbf{r}_0$, $\mathbf{p}_0 = \mathbf{z}_0$, $j=0$
\item While \emph{not converged} do

    \begin{list}{}{}
    \item $\alpha_j = (\mathbf{r}_j,\mathbf{z}_j)/(A\mathbf{p}_j,\mathbf{z}_j)$
    \item $\mathbf{x}_{j+1} = \mathbf{x}_j + \alpha_j \mathbf{p}_j$
    \item $\mathbf{r}_{j+1} = \mathbf{r}_j - \alpha_j A \mathbf{p}_j$
    \item Solve $H\mathbf{z}_{j+1} = \mathbf{r}_{j+1}$
    \item $\beta_j = -(\mathbf{z}_{j+1},\mathbf{r}_{j+1})/(\mathbf{z}_j,\mathbf{r}_j)$
    \item $\mathbf{p}_{j+1} = \mathbf{z}_{j+1} + \beta_j \mathbf{p}_j$
    \item $j=j+1$
    \end{list}

\item End while
\end{list}
\end{algorithm}
\vspace{-.6cm}\hrulefill\vspace{.2cm}
\end{figure}

The CGW algorithm uses short recurrences,
but as it is a Galerkin method~(see~\cite{Gut02R} p.~13)
it does not have the optimality property.

\subsection{Huang, Wathen, Li method}

Huang, Wathen and Li~\cite{huang_wathen_li:itmeth_skewsymm} described a method to solve
the SSS system~{(\ref{eq:sss_system})},~{(\ref{eq:sss_matrix})} with $\alpha=0$.
We denote this method by HWL, after the names of the authors.

\begin{figure}[ht]
\hrulefill
\vspace{-.2cm}
\begin{algorithm}[HWL]\label{algo:hwl} ~\\
\algospace
\begin{list}{}{}
\item Let $\mathbf{x}_0$ be given, $\mathbf{r}_0 = \mathbf{b}-A\mathbf{x}_0$,
$\mathbf{p}_0=A\mathbf{r}_0$, $j=0$
\item While \emph{not converged} do

    \begin{list}{}{}
    \item $\alpha_j = (\mathbf{r}_j,A\mathbf{p}_j)/(A\mathbf{p}_j,A\mathbf{p}_j)$
    \item $\mathbf{x}_{j+1} = \mathbf{x}_j + \alpha_j \mathbf{p}_j$
    \item $\mathbf{r}_{j+1} = \mathbf{b} - A \mathbf{x}_{j+1}$
    \item $\beta_j = (A^2\mathbf{p}_j,A\mathbf{r}_{j+1})/(A\mathbf{p}_j,A\mathbf{p}_j)$
    \item $\mathbf{p}_{j+1} = A\mathbf{r}_{j+1} + \beta_j \mathbf{p}_j$
    \item $j=j+1$
    \end{list}

\item End while
\end{list}
\end{algorithm}
\vspace{-.6cm}\hrulefill\vspace{.2cm}
\end{figure}

In exact arithmetic the HWL algorithm actually generates
the same approximations to the solution as the CGNR method,
as proved in~\cite{idema_vuik:minres_sss}.
As we already treat the CGNR algorithm in this paper,
we will not go into further detail on the HWL method.

\section{{\solvername} solver}\label{sec:idemasolver}

In the previous section we described various existing methods that can be used
to solve SSS systems~(\ref{eq:sss_system}),~(\ref{eq:sss_matrix}).
Each of these methods has its own drawback.
The general methods do not achieve both short recurrences and optimality,
while the specialized methods do not work for all values of $\alpha$.

In this section we present a solver for SSS systems that satisfies both the
short recurrences and the optimality property, and can be used for all values of $\alpha\in\mathbb{R}$.
This Minimal Residual method for Shifted Skew-Symmetric systems, or {\solvername},
is a Krylov subspace method that is based on the Lanczos algorithm~\cite{lanczos:itmeth_eigenvalue}.

\subsection{Shifted skew-symmetric Lanczos algorithm}\label{sec:idemasolver_lanczos_nonsymmetric}

For SSS matrices~(\ref{eq:sss_matrix}),
the non-symmetric Lanczos algorithm can be reduced to Algorithm~\ref{algo:lanczos_sss} below.
For details see~\cite{idema_vuik:minres_sss}.

\begin{figure}[ht]
\hrulefill
\vspace{-.2cm}
\begin{algorithm}[Shifted skew-symmetric Lanczos algorithm]\label{algo:lanczos_sss} ~\\
\algospace
\begin{list}{}{}
\item Let $\mathbf{q}_0=0\,,~j=0$ \\
Choose $\mathbf{p}_1$ with $\mathbf{p}_1 \neq \mathbf{0}$ and let $\beta_1=||\mathbf{p}_1||_2$
\item While $\beta_{j+1} > 0$ do

    \begin{list}{}{}
    \item $j = j+1$
    \item $\mathbf{q}_j = -\mathbf{p}_j / \beta_j$
    \item $\mathbf{p}_{j+1} = S \mathbf{q}_j - \beta_j \mathbf{q}_{j-1}$
    \item $\beta_{j+1} = ||\mathbf{p}_{j+1}||_2$
    \end{list}

\item End while
\end{list}
\end{algorithm}
\vspace{-.6cm}\hrulefill\vspace{.2cm}
\end{figure}

Besides the obvious fact that the computational work is greatly reduced
with respect to the non-symmetric Lanczos algorithm,
the SSS Lanczos algorithm also has the nice property that
serious breakdown will (in exact arithmetic) not occur,
as $\beta_j = ||\mathbf{p}_{j+1}||_2 = 0 \Leftrightarrow \mathbf{p}_{j+1} = 0$.

Defining $Q=\left[\mathbf{q}_1\ldots\mathbf{q}_j\right]$ the following relation holds:
\begin{equation}\label{eq:lanczos_sss_recurrences_ext}
A Q_j = Q_{j+1} \tilde{T}_j ,
\end{equation}
where the $(j\!+\!1)\!\times\!j$ extended Ritz matrix $\tilde{T}_j$ is defined as
\[ \tilde{T}_j =
\left[ \begin{array}{ccccc}
    \alpha   & \beta_2  & 0       & \cdots   & 0\\
    -\beta_2 & \alpha   & \beta_3 & \ddots   & \vdots \\
    0        & -\beta_3 & \ddots  & \ddots   & 0 \\
    \vdots   & \ddots   & \ddots  & \ddots   & \beta_j \\
    0        & \cdots   & 0       & -\beta_j & \alpha \\
    0        & \cdots   & \cdots  & 0        & -\beta_{j+1} \\
\end{array} \right]. \]

Note that the same result can be obtained by applying
the Arnoldi method~\cite{arnoldi:min_iter_eigenvalue_problem} to SSS matrices.
Like the Arnoldi algorithm reduces to the Lanczos method for symmetric matrices,
it also reduces to the shifted skew-symmetric Lanczos algorithm for SSS matrices
(see also~\cite{huckle:arnoldi_normal_matrices}).
This method was used by Jiang~{\cite{jiang:solving_sss_systems}} to derive a method
that is equivalent to the {\solvername} algorithm derived here.

\subsection{Solving shifted skew-symmetric systems}\label{sec:idemasolver_solving}

Krylov subspace methods can be categorized by the way the approximation
$\mathbf{x}_j=\mathbf{x}_0+\mathbf{s}_j$ of the solution $\mathbf{x}$ is calculated.
Minimal residual methods choose $\mathbf{s}_j$ such that
the norm of the residual $\mathbf{r}_j$ is minimized.
Orthogonal residual (or Galerkin) methods calculate $\mathbf{s}_j$ such that $Q^T_j\mathbf{r}_j=0$.
We will follow the minimal residual path, because it satisfies the
optimality property described in Section~\ref{sec:overviewexisting},
whereas orthogonal residual methods generally do not.

We start the solver with an initial guess $\mathbf{x}_0$.
Then in each iteration $j$ we will calculate $\mathbf{s}_j \in \mathcal{K}_j(A,\mathbf{r}_0)$
such that $\left|\left|\mathbf{r}_j\right|\right|_2$ is minimized.
The vectors generated by the above derived shifted skew-symmetric
Lanczos algorithm~\ref{algo:lanczos_sss} will be used to rewrite
$||\mathbf{r}_j||_2$ to such a form that we can calculate $\mathbf{s}_j$ from it.

We start Algorithm~\ref{algo:lanczos_sss} with $\mathbf{p}_1=\mathbf{r}_0=\mathbf{b}-A\mathbf{x}_0$.
Then, since the columns of $Q_j$ form a basis for $\mathcal{K}_j(A,\mathbf{q}_1)$ and
\begin{equation}\label{eq:q1}
\mathbf{q}_1 = -\frac{\mathbf{p}_1}{\left|\left|\mathbf{p}_1\right|\right|_2}
= -\frac{\mathbf{r}_0}{\left|\left|\mathbf{r}_0\right|\right|_2},
\end{equation}
the columns of $Q_j$ also form a basis for the Krylov subspace $\mathcal{K}_j(A,\mathbf{r}_0)$.
Therefore, for all $\mathbf{s}_j\in\mathcal{K}^j(A,\mathbf{r}_0)$ 
there exists a $\boldsymbol{\xi}_j \in \mathbb{R}^j$ such that
$\mathbf{s}_j=Q_j\boldsymbol{\xi}_j$, and we can write
\[ \left|\left|\mathbf{r}_j\right|\right|_2 = \left|\left|\mathbf{b}-A\mathbf{x}_j\right|\right|_2
= \left|\left|\mathbf{b}-A\mathbf{x}_0-A\mathbf{s}_j\right|\right|_2
= \left|\left|\mathbf{r}_0-AQ_j\boldsymbol{\xi}_j\right|\right|_2 . \]
Now, using equations~(\ref{eq:lanczos_sss_recurrences_ext}) and~(\ref{eq:q1}) we get
\[ \left|\left|\mathbf{r}_j\right|\right|_2
= \left|\left|\mathbf{r}_0-Q_{j+1}\tilde{T}_j\boldsymbol{\xi}_j\right|\right|_2
= \left|\left|Q_{j+1}(-\left|\left|\mathbf{r}_0\right|\right|_2\mathbf{e}_1
-\tilde{T}_j\boldsymbol{\xi}_j)\right|\right|_2 . \]
Since the matrix $Q_{j+1}$ is orthogonal,
and the 2-norm is invariant with respect to orthogonal transformations,
it follows that
\begin{equation} \label{eq:res_in_xi}
\left|\left|\mathbf{r}_j\right|\right|_2
= \left|\left| (\left|\left|\mathbf{r}_0\right|\right|_2\mathbf{e}_1
+ \tilde{T}_j\boldsymbol{\xi}_j) \right|\right|_2 .
\end{equation}
A minimal residual is therefore obtained by choosing
$\boldsymbol{\xi}_j = \boldsymbol{\hat\xi}_j$, where
\begin{equation}\label{eq:minres_xi}
\boldsymbol{\hat\xi}_j = \arg\min_{\boldsymbol{\xi}_j \in \mathbb{R}^j}
\left|\left| (\left|\left|\mathbf{r}_0\right|\right|_2\mathbf{e}_1
+ \tilde{T}_j\boldsymbol{\xi}_j) \right|\right|_2,
\end{equation}
i.e., $\boldsymbol{\hat\xi}_j$ is the least-squares solution of the linear system
\begin{equation}\label{eq:minres_system}
\tilde{T}_j\boldsymbol{\xi}_j = -\left|\left|\mathbf{r}_0\right|\right|_2\mathbf{e}_1 \,.
\end{equation}

This least-squares solution can be found with the help of Givens rotations.
A Givens rotation of a vector is the multiplication of that vector by a square orthogonal matrix of the form
\[ G_\mathbf{y}\left(k,l\right) =
\left[ \begin{array}{ccccc}
    I      & 0      & \cdots & \cdots & 0 \\
    0      & c      & \ddots & s      & \vdots \\
    \vdots & \ddots & I      & \ddots & \vdots \\
    \vdots & -s     & \ddots & c      & 0 \\
    0      & \cdots & \cdots & 0      & I
\end{array} \right]
\begin{array}{l}
   \vspace{-.3cm}\\ \textrm{row $k$} \\ \vspace{.45cm} \\ \textrm{row $l$}
\end{array} \]
where $I$ and $0$ denote identity and zero matrices of appropriate size respectively,
and where
\[
c = \frac{\mathbf{y}_k}{\sqrt{\mathbf{y}_k^2 + \mathbf{y}_l^2}} \textrm{~~and~~}
s = \frac{\mathbf{y}_l}{\sqrt{\mathbf{y}_k^2 + \mathbf{y}_l^2}} .
\]
The composition of this matrix is such that if $\mathbf{\tilde{y}}=G_\mathbf{y}\left(k,l\right)\mathbf{y}$,
then $\mathbf{\tilde{y}}_i=\mathbf{y}_i$ for all $i\not\in \{k,l\}$, and that $\mathbf{\tilde{y}}_l=0$.

We define the following shorthand notation for the Givens rotations we are going to use:
\[ G_i = G_{\boldsymbol{\tau}_j^i} \left(i,i+1\right),~~ i = 1,\ldots,j, \]
where the transformation vector $\boldsymbol{\tau}_j^i$ is given by
$\boldsymbol{\tau}_j^i = G_{i-1} \cdots G_1 \mathbf{t}_j^i$.
Here $\mathbf{t}_j^i$ denotes column $i$ of the extended Ritz matrix $\tilde{T}_j$,
and thus $\boldsymbol{\tau}_j^i$ is this column of the
extended Ritz matrix after application of the rotations $1,\ldots,i-1$.
Note that the rotation $G_i$ is the same for all $j$, except for its dimensions.
With $G_i$ we mean the rotation matrix of appropriate size.

Using the above rotations, we define the transformed matrix
\begin{equation}\label{eq:ritz_rotated}
\tilde{U}_j = G_j \cdots G_1 \tilde{T}_j .
\end{equation}
Due to the structure of $\mathbf{t}_j^i$, and defining $G_{-1}=G_0=I$,
the columns of $\tilde{U}_j$ are given by
\begin{equation}\label{eq:u_transformation}
\mathbf{\tilde{u}}_j^i = G_i \cdots G_1 \mathbf{t}_j^i
= G_i G_{i-1} G_{i-2} \mathbf{t}_j^i = G_i G_{i-1} G_{i-2}
\left[\begin{array}{c} \mathbf{t}_i^i \\ \mathbf{0}_{j-i} \end{array}\right] =
\left[\begin{array}{c} \mathbf{\tilde{u}}_i^i \\ \mathbf{0}_{j-i} \end{array}\right].
\end{equation}
From an algorithmic point of view, this means that in iteration $j>1$
we can construct $\mathbf{\tilde{u}}_j^i$ for $i<j$ directly from $\mathbf{\tilde{u}}_{j-1}^i$,
without having to apply Givens rotations.
The only vector that has to be calculated using these rotations is $\mathbf{\tilde{u}}_j^j$,
and this can be done using only the rotations $G_j$, $G_{j-1}$ and $G_{j-2}$.

Note that $\mathbf{\tilde{u}}_i^i$ has dimension $i+1$,
and that $\mathbf{\tilde{u}}_i^i\left(i+1\right)=0$ due to the rotations.
Thus we can implicitely define the matrix $U_j$, by
\[ \tilde{U}_j = \left[ \begin{array}{c} U_j \\ 0 \cdots 0 \end{array} \right]. \]

\begin{theorem}
The matrix $U_j$ is a $j \times j$ matrix with the following sparsity structure:
\[ U_j = \left[ \begin{array}{ccccc}
    * & 0 & * &   &  \\
      & * & 0 & * &  \\
      &   & * & 0 & * \\
      &   &   & * & 0 \\
      &   &   &   & *
\end{array} \right]. \]
\end{theorem}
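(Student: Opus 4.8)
The plan is to track the effect of the Givens rotations $G_j\cdots G_1$ on the banded structure of $\tilde T_j$ column by column. The starting matrix $\tilde T_j$ is tridiagonal: column $i$, namely $\mathbf t_j^i$, has nonzeros only in rows $i-1$ (entry $\beta_i$), $i$ (entry $\alpha$), and $i+1$ (entry $-\beta_{i+1}$). The key observation, already recorded in equation~(\ref{eq:u_transformation}), is that $\tilde u_j^i = G_i G_{i-1} G_{i-2}\,\mathbf t_j^i$, so only three consecutive rotations act nontrivially on column $i$. First I would note that $G_i$ zeroes out row $i+1$ of column $i$ (by the definition of the Givens rotation $G_{\mathbf y}(k,l)$, which sets component $l$ to zero), so $U_j$ is upper triangular; the content of the theorem is the \emph{extra} zero on the first superdiagonal, i.e.\ $U_j(i,i+1)=0$ for the relevant $i$, together with the claim that the second superdiagonal entry $U_j(i,i+2)$ is (generically) nonzero and that there is no fill beyond it.

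The main step is to compute, for a fixed column index $i$, the three rows $i-2,i-1,i$ of $\tilde u_j^i$ after applying $G_{i-2},G_{i-1},G_i$ in turn, and to read off where the zeros land. I would argue as follows. The rotation $G_{i-2}=G_{\boldsymbol\tau}(i-2,i-1)$ mixes rows $i-2$ and $i-1$; since $\mathbf t_j^i$ has a zero in row $i-2$, after $G_{i-2}$ column $i$ acquires a (generically nonzero) entry in row $i-2$ coming from the $\beta_i$ in row $i-1$, while row $i-1$ still carries the complementary piece of $\beta_i$. This $(i-2)$-entry is untouched by $G_{i-1}$ and $G_i$ (which act on rows $i-1,i$ and $i,i+1$), so it becomes $\tilde u_i^i(i-2)$ — equivalently $U_j(i-2,i)=U_j(i-2,(i-2)+2)$, the second superdiagonal, which is nonzero as claimed. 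Next, $G_{i-1}=G_{\boldsymbol\tau}(i-1,i)$ is by construction the rotation that annihilates row $i$ of the \emph{previous} column $i-1$; the crucial point is to check what it does to the current column $i$ in row $i-1$. Because in the already-rotated matrix the $(i-1,i)$-entry equals (after $G_{i-2}$) a scalar multiple of $\beta_i$, and the rotation angle of $G_{i-1}$ was fixed so that $c_{i-1},s_{i-1}$ are proportional to the $(i-1,i-1)$ and $(i,i-1)$ entries of column $i-1$, one has to verify that the combination $c_{i-1}\cdot(\text{row }i-1)+s_{i-1}\cdot(\text{row }i)$ of column $i$ vanishes in row $i-1$. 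I expect this to reduce to the algebraic identity relating the shift $\alpha$ and the subdiagonal entries $\beta$, exactly the same cancellation mechanism that drives Theorem~\ref{th:gcr_is_orthomin} (the skew-symmetry forcing certain inner products to zero); concretely, the $(i-1,i-1)$ and $(i,i-1)$ entries of the partially-rotated column $i-1$ and the $(i-1,i)$, $(i,i)$ entries of column $i$ are built from the same $\alpha$, $\beta_{i-1}$, $\beta_i$ data, and the ratios line up so that the $(i-1,i)$ entry is killed. This gives $U_j(i-1,i)=U_j(i-1,(i-1)+1)=0$, the zero on the first superdiagonal. Finally $G_i$ zeroes row $i+1$ as noted, producing the diagonal entry $U_j(i,i)$ (generically nonzero, else the Lanczos process / least-squares problem would be degenerate) and no fill in rows $\le i-2$ beyond what we already found.

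Having handled a generic interior column, I would then dispose of the boundary columns separately: column $1$ has no row $0$, so there is nothing above the diagonal except the eventual $(1,3)$ fill; column $2$ has no row $0$ either, so it contributes the $(1,2)$ position, and one checks directly from $G_1$ that this entry can be nonzero — wait, actually the stated pattern shows a $0$ in position $(1,2)$, so the $i=2$ case must also produce the cancellation, and I would verify it is the same computation as above with the convention $G_{-1}=G_0=I$ making the "row $i-2=0$" terms simply absent. The last one or two columns need a similar check since there $-\beta_{j+1}$ in row $j+1$ is the bottom of the extended matrix and $G_j$ still annihilates it; the displayed $5\times5$ pattern with its trailing $*$ in the corner and $0$ just above it is exactly the generic behaviour with no special endpoint effect, so no separate argument is needed beyond noting the indices stay in range. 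The hardest part, I anticipate, is the verification that $G_{i-1}$ annihilates the first-superdiagonal entry of the current column: it is not a structural zero coming from sparsity but a genuine arithmetic cancellation, and writing $c_{i-1},s_{i-1}$ explicitly in terms of $\alpha$ and $\beta_i,\beta_{i+1}$ and confirming the identity is the crux of the proof. Everything else is bookkeeping on which three rotations touch which column and which rows they mix.
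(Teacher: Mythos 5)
Your outline follows the same route as the paper's proof: isolate the three rotations $G_{i-2},G_{i-1},G_i$ that act on column $i$ (equation~(\ref{eq:u_transformation})), observe that $G_i$ produces the triangular structure and $G_{i-2}$ the second-superdiagonal fill, and reduce the theorem to showing that $G_{i-1}$ annihilates the first-superdiagonal entry. But that last step --- which you yourself flag as ``the crux'' --- is exactly the part you do not carry out, and it is where essentially all of the work in the paper's proof lives. You assert that ``the ratios line up so that the $(i-1,i)$ entry is killed,'' and suggest this can be checked by ``writing $c_{i-1},s_{i-1}$ explicitly in terms of $\alpha$ and $\beta_i,\beta_{i+1}$.'' That last phrase points at the difficulty: $c_{i-1}$ and $s_{i-1}$ are \emph{not} functions of $\alpha,\beta_i,\beta_{i+1}$ alone. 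They are determined by $\boldsymbol{\tau}_j^{i-1}=G_{i-2}\cdots G_1\mathbf{t}_j^{i-1}$, i.e.\ by the entire history of rotations, and hence depend on all of $\beta_2,\ldots,\beta_i$. The paper handles this by introducing the recursively defined quantities $Z_1=\alpha^2$, $Z_i = (Z_1Z_3\cdots Z_{i-1})/(Z_2Z_4\cdots Z_{i-2})+\beta_i^2$ (and the odd-index analogue), proving by induction the closed forms~(\ref{eq:givens_coefficients}) for $c_i$ and $s_i$, and only then verifying the cancellation
\[
\beta_j\,c_{j-2}c_{j-1}+\alpha\,s_{j-1}
=\beta_j\sqrt{\tfrac{Z_1}{Z_j}}-\alpha\tfrac{\beta_j}{\sqrt{Z_j}}=0,
\]
where the telescoping product $c_{j-2}c_{j-1}=\sqrt{Z_1/Z_j}$ and $s_{j-1}=-\beta_j/\sqrt{Z_j}$ are precisely the non-obvious invariants that make the identity close. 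Without establishing such an invariant (or some equivalent), the vanishing of the superdiagonal entry is an unproven assertion, not a consequence of anything you have written down.

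Two smaller points. First, your appeal to ``the same cancellation mechanism that drives Theorem~\ref{th:gcr_is_orthomin}'' is only a heuristic: that theorem is about orthogonality of residuals and search directions in GCR, and there is no direct logical transfer from it to an identity among accumulated Givens coefficients; invoking it does not discharge the computation. Second, your treatment of the boundary columns is essentially right and matches the paper (the cases $j=1,2$ are the base of the induction, with $G_{-1}=G_0=I$), but note that in the paper these base cases are not merely degenerate instances of the generic computation --- they are needed to seed the induction on~(\ref{eq:givens_coefficients}), so they cannot be waved away once you commit to the inductive argument that the proof actually requires.
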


\begin{proof}

Define
\[
\renewcommand\arraystretch{1.4}
\begin{array}{lcl}
Z_1 &=& \alpha^2 \,, \\
Z_2 &=& Z_1 + \beta_2^2 \,, \\
Z_3 &=& Z_2 + \beta_3^2 \,, \\

Z_i &=& \frac{Z_1 Z_3 \cdots Z_{i-1}}{Z_2 Z_4 \cdots Z_{i-2}} + \beta_i^2 \,,~~ i > 3 \,,~ i \textrm{~even} \,, \\
Z_i &=& \frac{Z_2 Z_4 \cdots Z_{i-1}}{Z_3 Z_5 \cdots Z_{i-2}} + \beta_i^2 \,,~~ i > 3 \,,~ i \textrm{~odd} \,. \\
\end{array}
\renewcommand\arraystretch{1}
\]

Let, as in Section~\ref{sec:idemasolver_solving},
\[ G_j^i = G_{\boldsymbol{\tau}_j^i}\left(i, i+1\right) ~,~~ j \geq i \]
denote Givens rotation $i$ at iteration $j$, and let
\[
\renewcommand\arraystretch{1.4}
\begin{array}{rcl}
c_i &=& \frac{\boldsymbol{\tau}_j^i\left(i\right)}{\sqrt{\boldsymbol{\tau}_j^i\left(i\right)+\boldsymbol{\tau}_j^i\left(i+1\right)}} \\
s_i &=& \frac{\boldsymbol{\tau}_j^i\left(i+1\right)}{\sqrt{\boldsymbol{\tau}_j^i\left(i\right)+\boldsymbol{\tau}_j^i\left(i+1\right)}} \\
\end{array}
\renewcommand\arraystretch{1}
\]
denote the coefficients of $G_j^i$.
Note that for $j \geq i$ the values of $c_i$ and $s_i$ are indeed independent of $j$
due to the special structure of $\boldsymbol{\tau}_j^i$.

For $j=1$ we have $\boldsymbol{\tau}_1^1 = \textrm{t}_1^1$ and find
\[ c_1 = \frac{\sqrt{Z_1}}{\sqrt{Z_2}} ~,~~ s_1 = \frac{-\beta_2}{\sqrt{Z_2}} \,, \]
\[
\textbf{u}_1^1 = G_1^1\boldsymbol{\tau}_1^1 = G_1^1\textrm{t}_1^1 =
\left[\begin{array}{c}
\alpha c_1 - \beta_2 s_1 \\ -\alpha s_1 - \beta_2 c_1

\end{array}\right] =
\left[\begin{array}{c}
\sqrt{Z_2} \\ 0
\end{array}\right] .
\]

For $j=2$ we have
\[
\boldsymbol{\tau}_2^2 = G_2^1\textrm{t}_2^2 = 
\left[\begin{array}{c}
\beta_2 c_1 + \alpha s_1 \\ -\beta_2 s_1 + \alpha c_1 \\ -\beta_3
\end{array}\right] =
\left[\begin{array}{c}
0 \\ \sqrt{Z_2} \\ -\beta_3
\end{array}\right] .
\]
Thus we find
\[ c_2 = \frac{\sqrt{Z_2}}{\sqrt{Z_3}} ~,~~ s_2 = \frac{-\beta_3}{\sqrt{Z_3}} \,, \]
\[
\textbf{u}_2^2 = G_2^2\boldsymbol{\tau}_2^2 =
\left[\begin{array}{c}
0 \\ \sqrt{Z_2}c_2 - \beta_3 s_2 \\ -\sqrt{Z_2}s_2 - \beta_3 c_2
\end{array}\right] =
\left[\begin{array}{c}
0 \\ \sqrt{Z_3} \\ 0
\end{array}\right] .
\]

Now assume that
\begin{equation}\label{eq:givens_coefficients}
\renewcommand\arraystretch{1.4}
\begin{array}{lcl}
c_i &=& \sqrt{\frac{Z_2 Z_4 \cdots Z_i}{Z_3 Z_5 \cdots Z_{i+1}}} \,,~ i \textrm{~even} \,, \\
c_i &=& \sqrt{\frac{Z_1 Z_3 \cdots Z_i}{Z_2 Z_4 \cdots Z_{i+1}}} \,,~ i \textrm{~odd} \,, \\
s_i &=& \frac{-\beta_{i+1}}{\sqrt{Z_{i+1}}} \,.
\end{array}
\renewcommand\arraystretch{1}
\end{equation}
Obviously this is true for $i=1,2$.
We will show by induction that it holds for all $i>0$.

For $j>2$ we have
\[ \boldsymbol{\tau}_j^j = G_j^{j-1}G_j^{j-2}\textrm{t}_j^j = 
G_j^{j-1}\left[\begin{array}{c}
\textbf{0}_{j-3} \\ \beta_j s_{j-2} \\ \beta_j c_{j-2} \\ \alpha \\ -\beta_{j+1}

\end{array}\right] =
\left[\begin{array}{c}
\textbf{0}_{j-3} \\ \beta_j s_{j-2} \\ \beta_j c_{j-2}c_{j-1} + \alpha s_{j-1} \\
-\beta_j c_{j-2}s_{j-1} + \alpha c_{j-1} \\ -\beta_{j+1}
\end{array}\right] , \]
where $\textbf{0}_i$ denotes the null vector with dimension $i$.

Suppose that $j$ is even. Using~(\ref{eq:givens_coefficients}) we then find \\
\[
\renewcommand\arraystretch{1.4}
\begin{array}{lcl}

\beta_j c_{j-2}c_{j-1} + \alpha s_{j-1} &=& \\
\beta_j \sqrt{\frac{Z_2 Z_4 \cdots Z_{j-2}}{Z_3 Z_5 \cdots Z_{j-1}}}
\sqrt{\frac{Z_1 Z_3 \cdots Z_{j-1}}{Z_2 Z_4 \cdots Z_j}}
+ \alpha \frac{-\beta_j}{\sqrt{Z_j}} &=& \\
\beta_j \sqrt{\frac{Z_1}{Z_j}} + \alpha \frac{-\beta_j}{\sqrt{Z_j}} &=& 0 \,.

\renewcommand\arraystretch{1}
\end{array}
\]
Further we can write
\[
\renewcommand\arraystretch{1.8}
\begin{array}{lcl}
-\beta_j c_{j-2}s_{j-1} + \alpha c_{j-1} &=& \\
-\beta_j \sqrt{\frac{Z_2 Z_4 \cdots Z_{j-2}}{Z_3 Z_5 \cdots Z_{j-1}}} \frac{-\beta_j}{\sqrt{Z_j}}
+ \alpha \sqrt{\frac{Z_1 Z_3 \cdots Z_{j-1}}{Z_2 Z_4 \cdots Z_j}} &=& \\
\alpha \sqrt{\frac{Z_1 Z_3 \cdots Z_{j-1}}{Z_2 Z_4 \cdots Z_j}} +
\frac{\beta_j^2}{\sqrt{Z_j}} \sqrt{\frac{Z_2 Z_4 \cdots Z_{j-2}}{Z_3 Z_5 \cdots Z_{j-1}}} &=& \\
\frac{Z_1}{\sqrt{Z_j}} \sqrt{\frac{Z_3 \cdots Z_{j-1}}{Z_2 Z_4 \cdots Z_j-2}} +
\frac{\beta_j^2}{\sqrt{Z_j}} \sqrt{\frac{Z_2 Z_4 \cdots Z_{j-2}}{Z_3 Z_5 \cdots Z_{j-1}}} &=& \\
\sqrt{\frac{Z_2 Z_4 \cdots Z_{j-2}}{Z_3 Z_5 \cdots Z_{j-1}}}
\left( \frac{\frac{Z_1Z_3 \cdots Z_{j-1}}{Z_2 Z_4 \cdots Z_j-2}}{\sqrt{Z_j}} +
\frac{\beta_j^2}{\sqrt{Z_j}} \right) &=& \\
\sqrt{\frac{Z_2 Z_4 \cdots Z_{j-2}}{Z_3 Z_5 \cdots Z_{j-1}}}\sqrt{Z_j} &=&
\sqrt{\frac{Z_2 Z_4 \cdots Z_j}{Z_3 Z_5 \cdots Z_{j-1}}} \,.
\renewcommand\arraystretch{1}
\end{array}
\]

Therefore, assuming~(\ref{eq:givens_coefficients}) for $i=1\ldots{j-1}$, for even $j>2$ we have
\[ \boldsymbol{\tau}_j^j = 
\left[\begin{array}{c}
\textbf{0}_{j-3} \\ \frac{-\beta_{j-1}\beta_j}{Z_{j-1}} \\ 0 \\
\sqrt{\frac{Z_2 Z_4 \cdots Z_j}{Z_3 Z_5 \cdots Z_{j-1}}} \\ -\beta_{j+1}
\end{array}\right] . \]
From this it easily follows that indeed for $j$, again assumption~(\ref{eq:givens_coefficients}) holds, and that
\[
\textbf{u}_j^j = G_j^j\boldsymbol{\tau}_j^j =
\left[\begin{array}{c}
\textbf{0}_{j-3} \\ \frac{-\beta_{j-1}\beta_j}{Z_{j-1}} \\ 0 \\
\sqrt{\frac{Z_2 Z_4 \cdots Z_j}{Z_3 Z_5 \cdots Z_{j-1}}}c_j - \beta_{j+1}s_j \\ 0
\end{array}\right] =
\left[\begin{array}{c}
\textbf{0}_{j-3} \\ \frac{-\beta_{j-1}\beta_j}{Z_{j-1}} \\ 0 \\ \sqrt{Z_{j+1}} \\ 0
\end{array}\right] .
\]

In the same way we can prove that for odd $j>2$ we have
\[ \boldsymbol{\tau}_j^j = 
\left[\begin{array}{c}
\textbf{0}_{j-3} \\ \frac{-\beta_{j-1}\beta_j}{Z_{j-1}} \\ 0 \\
\sqrt{\frac{Z_1 Z_3 \cdots Z_j}{Z_2 Z_4 \cdots Z_{j-1}}} \\ -\beta_{j+1}
\end{array}\right] , \]
that thus assumption~(\ref{eq:givens_coefficients}) holds for all $j>0$,
and that for odd $j>2$ again
\[
\textbf{u}_j^j =
\left[\begin{array}{c}
\textbf{0}_{j-3} \\ \frac{-\beta_{j-1}\beta_j}{Z_{j-1}} \\ 0 \\ \sqrt{Z_{j+1}} \\ 0
\end{array}\right] .
\]

\end{proof}

Note that the zeros on the first superdiagonal of $U_j$ are non-trivial,
and will result in a 2-term recursion for the calculation of $\mathbf{s}_j$,
instead of the expected 3-term recursion.

Now let us define the $\left(j+1\right)$-dimensional rotated vector
\begin{equation}\label{eq:rhs_rotated}
\mathbf{\tilde{v}}_j = - G_j \cdots G_1 \left|\left|\mathbf{r}_0\right|\right|_2\mathbf{e}_1.
\end{equation}
Note that, defining $\mathbf{\tilde{v}}_0 = -\left|\left|\mathbf{r}_0\right|\right|_2$, we can write
$\mathbf{\tilde{v}}_j = G_j \left[ \mathbf{\tilde{v}}_{j-1} ~~ 0 \right]^T,~j>0$.
Further, writing
$\mathbf{\tilde{v}}_j = \left[ \mathbf{v}_j ~~ \varepsilon_j \right]^T$,
we have
\begin{equation}\label{eq:v_construction}
\mathbf{\tilde{v}}_j
= G_j \left[ \begin{array}{l} \mathbf{v}_{j-1} \\ \varepsilon_{j-1} \\ 0 \end{array} \right]
= \left[ \begin{array}{l} \mathbf{v}_{j-1} \\ \mu_j \\ \varepsilon_j \end{array} \right]
~~\textrm{and}~~
\mathbf{v}_j = \left[ \begin{array}{l} \mathbf{v}_{j-1} \\ \mu_j \end{array} \right].
\end{equation}

Since a Givens rotation is an orthogonal transformation,
using equation~(\ref{eq:res_in_xi}) and
expressions~(\ref{eq:ritz_rotated}) and~(\ref{eq:rhs_rotated}), we can write
\[ \left|\left|\mathbf{r}_j\right|\right|_2
= \left|\left| \, (\tilde{T}_j\boldsymbol{\xi}_j
+ \left|\left|\mathbf{r}_0\right|\right|_2\mathbf{e}_1) \, \right|\right|_2
= \left|\left| \, \tilde{U}_j\boldsymbol{\xi}_j
- \mathbf{\tilde{v}}_j \, \right|\right|_2 . \]
Thus the solution $\boldsymbol{\hat\xi}_j$ of equation~(\ref{eq:minres_xi}),
is equal to the least-squares solution of the system
\[ \tilde{U}_j \boldsymbol{\xi}_j = \mathbf{\tilde{v}}_j \Leftrightarrow
\left[ \begin{array}{c} U_j \\ 0 \cdots 0 \end{array} \right]\boldsymbol{\xi}_j
= \left[ \begin{array}{l} \mathbf{v}_j \\ \varepsilon_j \end{array} \right] . \]

From this result it is trivial that $\boldsymbol{\hat\xi}_j$ is the solution of the system
\begin{equation}\label{eq:minres_system2}
U_j\boldsymbol{\xi}_j = \mathbf{v}_j,
\end{equation}
and that the residual error is given by
\begin{equation}\label{eq:minres_res}
\left|\left|\mathbf{r}_j\right|\right|_2 = \left|\varepsilon_j\right|.
\end{equation}

To determine the minimal residual approximation $\mathbf{x}_j$
we now need to calculate $\mathbf{s}_j=Q_j\boldsymbol{\hat\xi}_j$.
If we calculate $\boldsymbol{\hat\xi}_j$ as the solution of system~(\ref{eq:minres_system2}),
and then multiply by $Q_j$ directly, we would need to store the entire matrix $Q_j$ in memory.
Thus the algorithm would use long recurrences.
To overcome this problem we can use the technique that is also applied in
the MINRES algorithm~\cite{paige_saunders:minres}, as detailed below.

Define the matrix $W_j = Q_j U_j^{-1}$, then
\begin{equation}\label{eq:w_implicit}
W_j U_j = Q_j,
\end{equation}
and
\begin{equation}\label{eq:s_decomp_w}
\mathbf{s}_j = Q_j U_j^{-1} \mathbf{v}_j = W_j \mathbf{v}_j.
\end{equation}
Further, introduce the notations
$W_j =   \left[ \mathbf{w}_j^1 \cdots \mathbf{w}_j^j \right]$ and
$W_j^i = \left[ \mathbf{w}_j^1 \cdots \mathbf{w}_j^i \right]$.

For $j=1$ equation~(\ref{eq:w_implicit}) has the unique solution
$\mathbf{w}_1^1 u_{1,1} = \mathbf{q}_1 \Rightarrow \mathbf{w}_1^1 = \frac{1}{u_{1,1}} \mathbf{q}_1$.
Now assume that $j=i$ with $i>1$,
and that we have a unique solution of equation~(\ref{eq:w_implicit}) for $j=i-1$, then
\[ \left[ \, W_i^{i-1} ~~ \mathbf{w}_i^i \, \right]
\left[ \, \begin{array}{c} U_{i-1} \\ 0 \cdots 0 \end{array} ~ \mathbf{u}_i^i \, \right]
= \left[ \, Q_{i-1} ~~ \mathbf{q}_i \, \right], \]
which can be split in the equations
\begin{eqnarray}
W_i^{i-1} U_{i-1} &=& Q_{i-1} \label{eq:w_implicit_split1} \\
W_i \mathbf{u}_i^i &=& \mathbf{q}_i \label{eq:w_implicit_split2} \,.
\end{eqnarray}

Equation~(\ref{eq:w_implicit_split1}) has the unique solution
$W_i^{i-1} = W_{i-1}$, and due to the special structure of $\mathbf{u}_i^i$
equation~(\ref{eq:w_implicit_split2}) is easily solved.
Thus, by induction, we find that we can unambiguously define $\mathbf{w}_i = \mathbf{w}_j^i$,
and that equation~(\ref{eq:w_implicit}) is uniquely solved, in a 2-term recurrence,
by the matrix $W_j$ with columns
\begin{equation}\label{eq:w_solution}
\mathbf{w}_i = \left\{
\begin{array}{l}
\displaystyle \frac{1}{u_{i,i}} \mathbf{q}_i,~i \in \left\{1,2\right\} \\
\displaystyle \frac{1}{u_{i,i}} \left( \mathbf{q}_i-u_{i-2,i}\mathbf{w}_{i-2} \right),~i>2
\end{array}
\right.
\end{equation}

The final step is to find the approximating solution $\mathbf{x}_j$.
Using equations~(\ref{eq:v_construction}) and~(\ref{eq:s_decomp_w}) we can write
\[ \mathbf{s}_j = W_j \mathbf{v}_j
= W_{j-1} \mathbf{v}_{j-1} + \mu_j \mathbf{w}_j
= \mathbf{s}_{j-1} + \mu_j \mathbf{w}_j . \]
Thus the approximation of the solution in iteration $j$ is given by
\begin{equation}\label{x_solution}
\mathbf{x}_j = \mathbf{x}_0 + \mathbf{s}_j = \mathbf{x}_{j-1} + \mu_j \mathbf{w}_j.
\end{equation}

Combining all the above results, we now present the {\solvername} solver Algorithm~\ref{algo:sss_solver}.
To make the algorithm easier to read, we have used the simplified notations
$\mathbf{u}_j$ for $\mathbf{\tilde{u}}_j$ and $\mathbf{v}_j$ for $\mathbf{\tilde{v}}_j$.
Further note that $\mathbf{u}_j = \left[ \, \mathbf{0}_{j-2} ~~ \beta_j ~~ \alpha ~~ -\beta_{j+1} \, \right]^T$
should be read such that $\mathbf{u}_1 = \left[ \, \alpha ~~ -\beta_{j+1} \, \right]^T$,
and $\mathbf{u}_2 = \left[ \, \beta_{j} ~~ \alpha ~~ -\beta_{j+1} \, \right]^T$,
and that we define $\mathbf{u}_j(k)=0$ for $k \leq 0$.

\begin{figure}[ht]
\hrulefill
\vspace{-.2cm}
\begin{algorithm}[{\solvername}]\label{algo:sss_solver} ~\\
\algospace
\begin{list}{}{}
\item Choose $\mathbf{x}_0$ and set the residual error tolerance $\tau$ \\
Let $\mathbf{r}_0=\mathbf{b}-A\mathbf{x}_0$, $S = A -\alpha I$, $j=0$ \\
Let $\mathbf{q}_0=0$, $\mathbf{p}_1=\mathbf{r}_0$,
$\varepsilon=\beta_1=\left|\left|\mathbf{p}_1\right|\right|_2$,
$\mathbf{v}_0=\left[-\beta_1\right]$, $G_{-1}=G_{0}=I, \mathbf{w}_{-1}=\mathbf{w}_{-0}=\mathbf{0}$

\item While $\varepsilon > \tau$ do

    \begin{list}{}{}
    \item $j = j+1$
    \item $\mathbf{q}_j = -\mathbf{p}_j / \beta_j$
    \item $\mathbf{p}_{j+1} = S \mathbf{q}_j - \beta_j \mathbf{q}_{j-1}$
    \item $\beta_{j+1} = \left|\left|\mathbf{p}_{j+1}\right|\right|_2$
    \end{list}

    \begin{list}{}{}
    \item $\mathbf{u}_j = \left[ \, \mathbf{0}_{j-2} ~~ \beta_j ~~ \alpha ~~ -\beta_{j+1} \, \right]^T$
    \item $G_j = GivensRotation(\mathbf{u}_j, j, j+1)$
    \item $\mathbf{u}_j = G_jG_{j-1}G_{j-2}\mathbf{u}_j$
    \item $\mathbf{v}_j = \left[ \, \mathbf{v}_{j-1} ~~ 0 \, \right]^T$
    \item $\mathbf{v}_j = G_j\mathbf{v}_j$
    \end{list}

    \begin{list}{}{}
    \item $\mathbf{w}_j = (\mathbf{q}_j - \mathbf{u}_j(j-2)\mathbf{w}_{j-2}) / \mathbf{u}_j(j)$
    \item $\mathbf{x}_j = \mathbf{x}_{j-1} + \mathbf{v}_j(j) \mathbf{w}_j$
    \item $\varepsilon = \mathbf{v}_j(j+1)$
    \end{list}

\item End while
\end{list}
\end{algorithm}
\vspace{-.6cm}\hrulefill\vspace{.2cm}
\end{figure}

\section{Theoretical comparison}\label{sec:solver_comparison}

The GMRES algorithm is based on the Arnoldi method, combined with Givens rotations.
As noted in Section~\ref{sec:idemasolver_lanczos_nonsymmetric},
for SSS matrices the Arnoldi method reduces to the shifted skew-symmetric Lanczos algorithm.
Thus it is clear that the first couple of algorithmic steps of {\solvername},
as described in the previous section, are equal to those of the
GMRES algorithm with the orthogonalization truncated after one step,
and using only the last three Givens rotations.

In the final step, GMRES calculates
the matrix vector product $\mathbf{s}_j=Q_j\boldsymbol{\hat\xi}_j$ directly.
As noted this constitutes a long recurrence algorithm, even for SSS systems.
The {\solvername} solver, instead uses a technique also applied in MINRES
to calculate the same update to the approximate solution with short recurrences.
In this light, it is clear that {\solvername} can also be seen
as a shifted skew-symmetric version of MINRES.

The GCR algorithm, and thus for SSS systems Orthomin(1), also minimizes the residual
within the Krylov subspace $\mathcal{K}_j(A,\mathbf{r}_0)$ in each step.
Assuming that $A$ is non-singular and that GCR does not break down, this implies that in exact arithmetic
it generates the same approximations to the solution as GMRES and {\solvername}.
Thus we have
\[ \mathbf{x}_j^\textrm{\solvername} = \mathbf{x}_j^\textrm{GMRES} = \mathbf{x}_j^\textrm{GCR}. \]

As {\solvername} uses short recurrences, it is more efficient in finding these iterates than GMRES.
GCR is also very efficient, but it breaks down for skew-symmetric systems, i.e., for $\alpha=0$.
Also, we expect that for small $\alpha$ GCR will have problems,
generating a direction $\beta_j\mathbf{v}_j$ with very small norm $\beta_j$.
As a result, rounding errors will blow up when dividing the generated direction by its norm.

The relation between the residual of a minimal residual method MR,
and that of a Galerkin method G, is known from~(2.29) of~\cite{Gut01R}.
If the minimal residual method does not stagnate, i.e., if
\[ c = \frac{\left|\left|\mathbf{r}_j^\textrm{MR}\right|\right|_2}
{\left|\left|\mathbf{r}_{j-1}^\textrm{MR}\right|\right|_2} < 1, \]
then the norm of the residual satisfies the identity
\begin{equation}\label{eq:relation_mr_galerkin}
\left|\left|\mathbf{r}_j^\textrm{MR}\right|\right|_2
= \sqrt{1-c^2} \left|\left|\mathbf{r}_j^\textrm{G}\right|\right|_2 .
\end{equation}
It follows directly that, if the {\solvername} method does not stagnate,
the calculated residuals are always smaller than those of a Galerkin method like CGW, thus
\[ \left|\left|\mathbf{r}_j^\textrm{\solvername}\right|\right|_2
< \left|\left|\mathbf{r}_j^\textrm{CGW}\right|\right|_2 . \]
Furthermore, relation~(\ref{eq:relation_mr_galerkin}) can be used to understand
the so-called peak-plateau connection \cite{Cul95,vdV93V,Wal95}.
The peak-plateau connection is the phenomenon that a peak in the residual norm history of a Galerkin method
is accompanied by a plateau, i.e., the norm nearly stagnating, in the residual norm history of a minimal residual method.

Bi-CGSTAB does not satisfy the optimality property,
and will thus in general converge slower than the minimal residual methods described.
On the other hand, general methods like Bi-CGSTAB, but also GMRES and CGNR,
offer more preconditioning options than the algorithms specifically tailored for SSS systems.

CGNR can be expected to converge very fast as long as the problem is well-conditioned.
The work by Greif and Varah~{\cite{greif_varah:iter_sol_skew_symm}}
contains some interesting insights in the use of normal equations to solve SSS systems with $\alpha=0$,
as well as work on preconditioners for solvers that are designed to deal with skew-symmetric systems.
A possible alternative for, or addition to, preconditioning,
is the regularization technique for CGNR, described in~\cite{chen_shen:regularized_cgnr}.
This technique can readily be combined with {\solvername} or any other method, instead of CGNR.

Table~\ref{tab:solver_properties} gives an overview of some important properties of the computational load,
as well as some general properties of {\solvername} and other algorithms treated in the previous chapters.
The amount of computational work is measured for a single iteration $j$.
Note that these numbers can vary with the exact implementation of the algorithm.\footnote{See
\url{http://ta.twi.tudelft.nl/nw/users/idema/mrs3/} for MATLAB implementations of these methods.}
Further note that with optimality we mean that the method satisfies
the optimality property within the Krylov subspace $\mathcal{K}_j(A,\mathbf{r}_0)$.

\begin{table}[!htb]
\begin{center}
\begin{tabular}{|c|c|c|c|c|c|c|}
\hline
& matvec   & vector   & inner    & vector & & \\
& products & updates  & products & memory & $\alpha$ & optimality  \\
\hline
{\solvername} & 1 & 3 & 1 & 5 & all & yes \\
CGW           & 1 & 3 & 2 & 4 & $\neq 0$ & no \\
Trunc-GCR     & 1 & 4 & 4 & 5 & $\neq 0$ & yes \\
GMRES         & 1 & $\frac{j+1}{2}$ & $\frac{j+1}{2}$ & $j+3$ & all & yes \\
Bi-CGSTAB     & 2 & 3 & 3 & 7 & all & no \\
CGNR          & 2 & 3 & 3 & 5 & all & no \\
\hline
\end{tabular}
\caption{Important properties of {\solvername} and other solvers}
\label{tab:solver_properties}
\end{center}
\end{table}

\section{Numerical results}\label{sec:numerical_experiments}

In this section we compare the {\solvername} method
with the CGW, GCR, GMRES, Bi-CGSTAB and CGNR methods numerically,
by solving some SSS systems $A\mathbf{x}=\mathbf{b}$ and analysing the residual norm history.
Also, we will verify numerically the theoretical results from Section~\ref{sec:solver_comparison}.

As mentioned in the introduction of this paper,
SSS systems frequently occur in the solution of advection-diffusion problems.
For our numerical experiments, we use matrices that correspond to a
finite difference discretisation of the following partial differential equation:
\[ \frac{\partial{u}}{\partial{x}} + \gamma\frac{\partial{u}}{\partial{y}} = f \]
with $(x,y) \in [0,1] \times [0,1]$ and appropriate boundary conditions.
The number of gridpoints in $x$ and $y$ direction are denoted by $n_1$ , $n_2$ respectively.

The resulting matrices are of the form $A = \alpha I + S$,
where $A\in\mathbb{R}^{n \times n}$, $n = n_1n_2$, $h_1 = \frac{1}{n_1}$, $h_2 = \frac{1}{n_2}$,
and with the matrix $S$ a skew-symmetric block tridiagonal matrix,
of which the $n_1 \times n_1$ nonzero blocks are given by
\begin{eqnarray*}
S_{i,i} &=& \frac{1}{2h_1}\textrm{tridiag}\left(-1,0,1\right), \textrm{ for } i = 1, \hdots, n_2 , \\
S_{i,i+1} &=& -S_{i+1,i} ~=~ \frac{1}{2h_2}\textrm{diag}\left(\gamma\right), \textrm{ for } i = 1, \hdots, n_2 -1 .
\end{eqnarray*}
For this discussion, we use $n_1=n_2=20$ and vary $\alpha$ and $\gamma$.
For practical validation, experiments with much larger dimensions were done.
These led to the same conclusions as presented below.

The starting approximation is chosen to be $\mathbf{x}_0=\mathbf{0}$,
and the right-hand side vector $\mathbf{b}$ consists of random numbers, and is normalized such that
$\left|\left|\mathbf{r}_0\right|\right|_2 = \left|\left|\mathbf{b}\right|\right|_2 = 1$.
Our special interest goes to ill-conditioned systems, i.e.,
systems with a coefficient matrix $A$ with very large condition number $\kappa$,
and systems with small $\alpha$.

\subsection{Numerical comparison of {\solvername}, GCR and CGNR}

{\solvername}, GCR and CGNR are all short recurrence algorithms.
In our tests all three methods showed comparable convergence for well-conditioned problems with large $\alpha$.
However, Figure~\ref{fig:num_comp1a} shows that for an ill-conditioned system with small $\alpha$,
both GCR and CGNR cannot keep up with the performance of {\solvername}.
In Figure~\ref{fig:num_comp1b} the performance of CGNR is equal to that of {\solvername}.
This demonstrates that, if the system is well-conditioned,
CGNR can perform as well as {\solvername} for small $\alpha$,
while GCR does not converge to an accurate solution.

\begin{figure}[!htb]
\centering
\subfigure[$\alpha=10^{-6}$, $\gamma=1$, $\kappa=4\cdot 10^7$]
{
\includegraphics[height=5.7cm]{./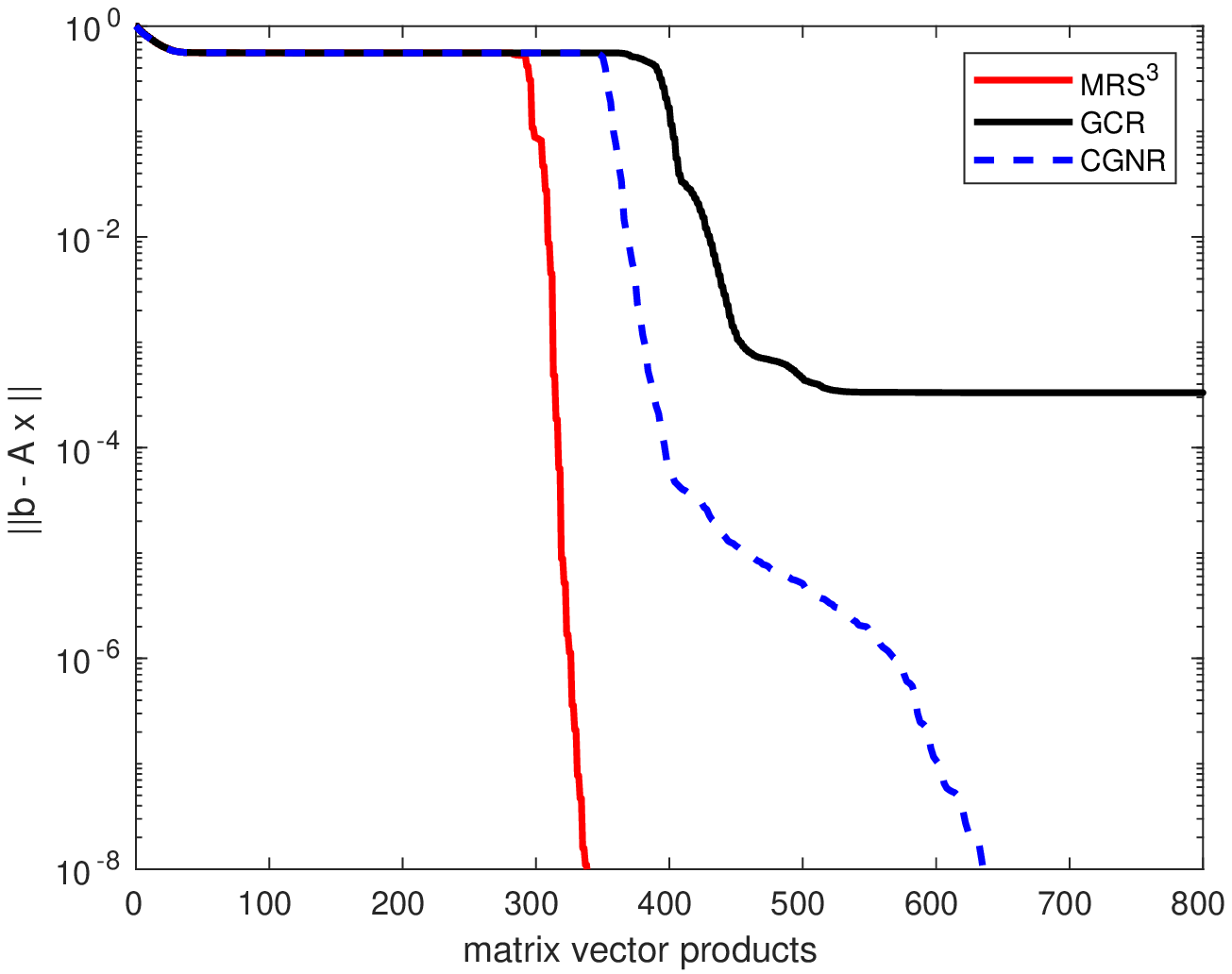}\label{fig:num_comp1a}
}
\subfigure[$\alpha=10^{-5}$, $\gamma=100$, $\kappa=15$]
{
\includegraphics[height=5.7cm]{./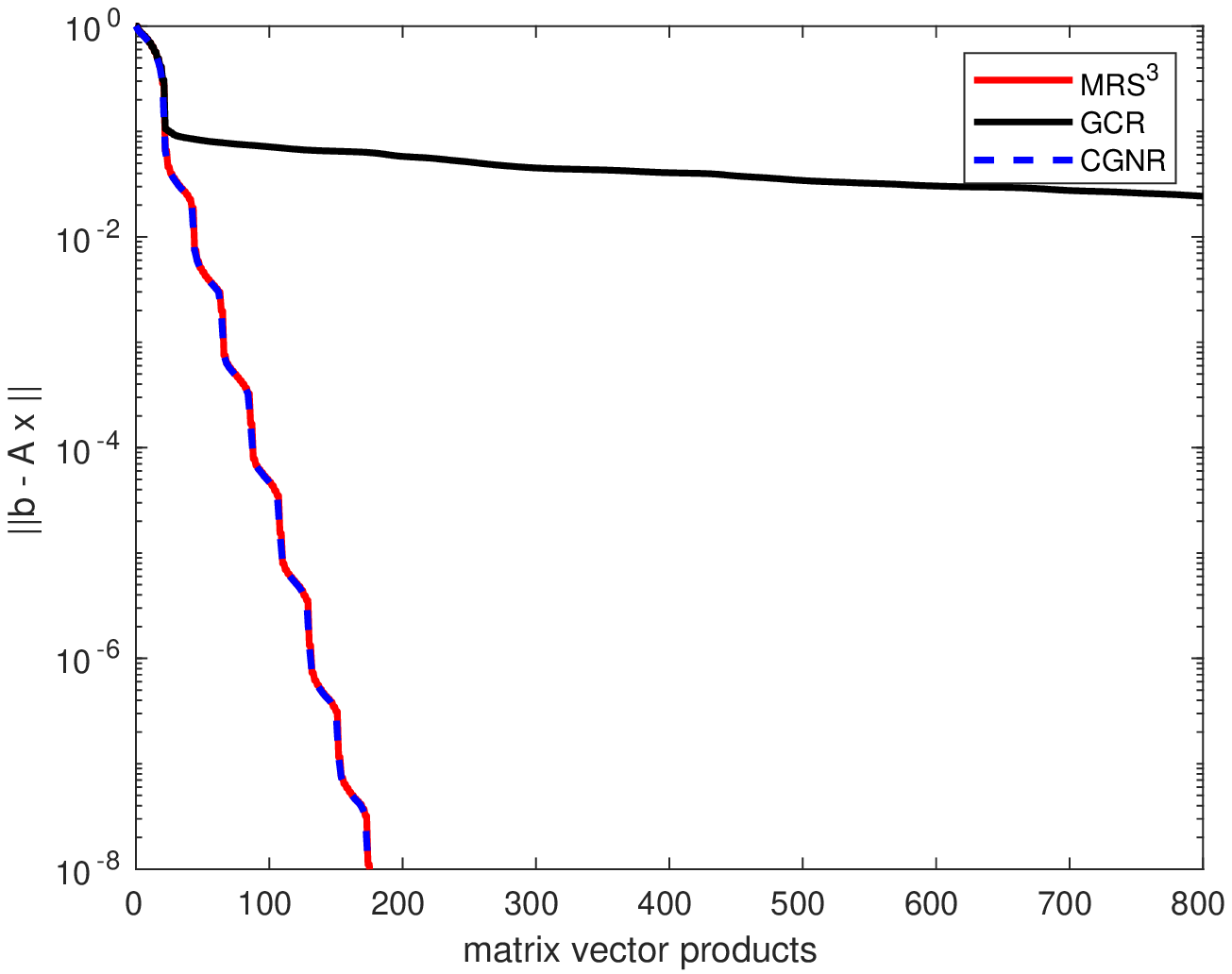}\label{fig:num_comp1b}
}
\caption{Convergence of {\solvername}, GCR and CGNR}
\end{figure}

\subsection{Numerical comparison of {\solvername}, GMRES(3) and Bi-CGSTAB}

We compare {\solvername} with the general Krylov methods GMRES and Bi-CGSTAB.
In order to make the memory and work requirements comparable we use GMRES(3),
which means that GMRES is restarted every 3 iterations.
In our experiments we have also checked that full GMRES
indeed leads to the same numerical results as {\solvername}.

As expected, our experiments show that GMRES(3) and Bi-CGSTAB
do not perform very well for SSS systems compared to {\solvername},
see Figures~\ref{fig:num_comp2a} and~\ref{fig:num_comp2b}.
With respect to Bi-CGSTAB, we should note that for these problems
BiCGStab2~\cite{Gut93} and Bi-CGSTAB($\ell$)~\cite{Sle94VF} may be better alternatives.

\begin{figure}[!htb]
\centering
\subfigure[$\alpha=10$, $\gamma=1$, $\kappa=4$]
{
\includegraphics[height=5.7cm]{./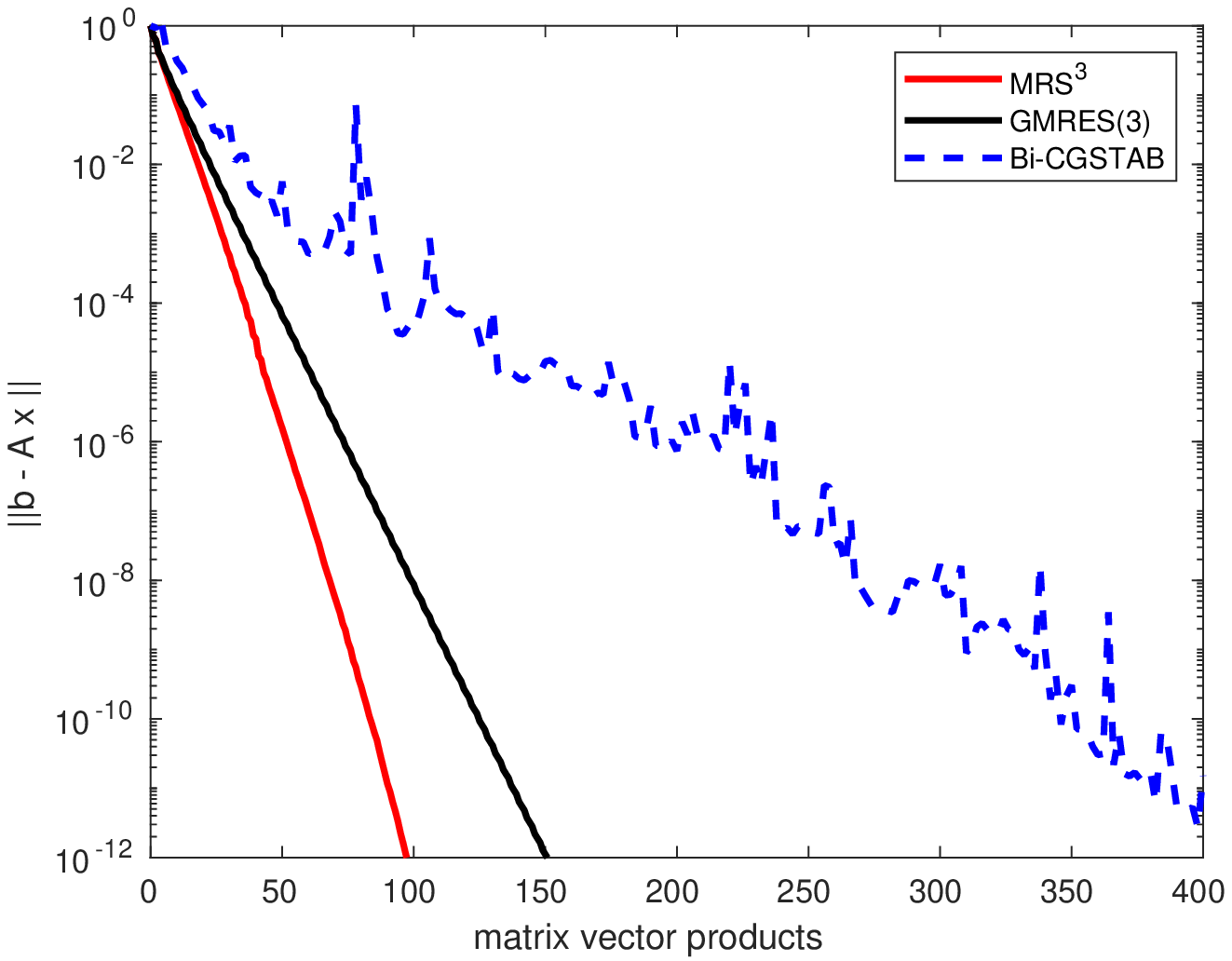}\label{fig:num_comp2a}
}
\subfigure[$\alpha=10^{-3}$, $\gamma=1$, $\kappa=4\cdot 10^4$]
{
\includegraphics[height=5.7cm]{./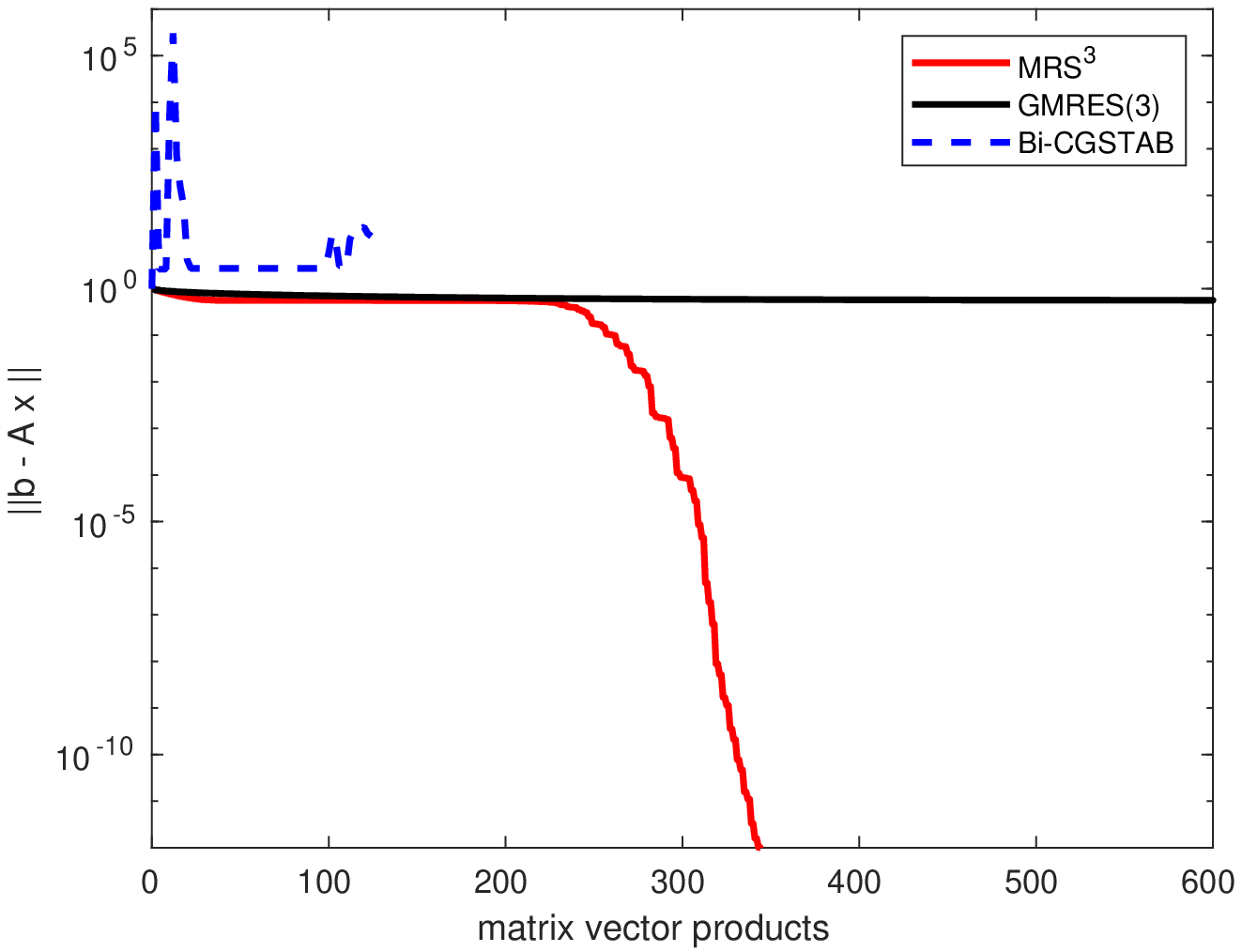}\label{fig:num_comp2b}
}
\caption{Convergence of {\solvername}, GMRES(3) and Bi-CGSTAB}
\end{figure}

\subsection{Numerical comparison of {\solvername} and CGW}

For well-conditioned systems, numerical experiments confirm
the theoretical prediction~(\ref{eq:relation_mr_galerkin}) of the CGW residual norm.
For such systems CGW performs very well, even though
if $\alpha$ is small the peaks in the CGW residual norm history become very large,
as demonstrated in Figure~\ref{fig:num_comp3a}.
This is because for small $\alpha$,
every other iteration minimal residual methods nearly stagnate,
leading to a peak in the CGW residual norm
in concurrence with the peak-plateau connection described in Section~\ref{sec:solver_comparison}.

For less well-conditioned systems, in practice,
CGW can no longer keep up with the theoretical residual norm.
This leads to slower convergence, as shown in Figure~\ref{fig:num_comp3b},
and eventually divergence, where {\solvername} still performs well.

\begin{figure}[!htb]
\centering
\subfigure[$\alpha=10^{-3}$, $\gamma=100$, $\kappa=15$]
{
\includegraphics[height=5.7cm]{./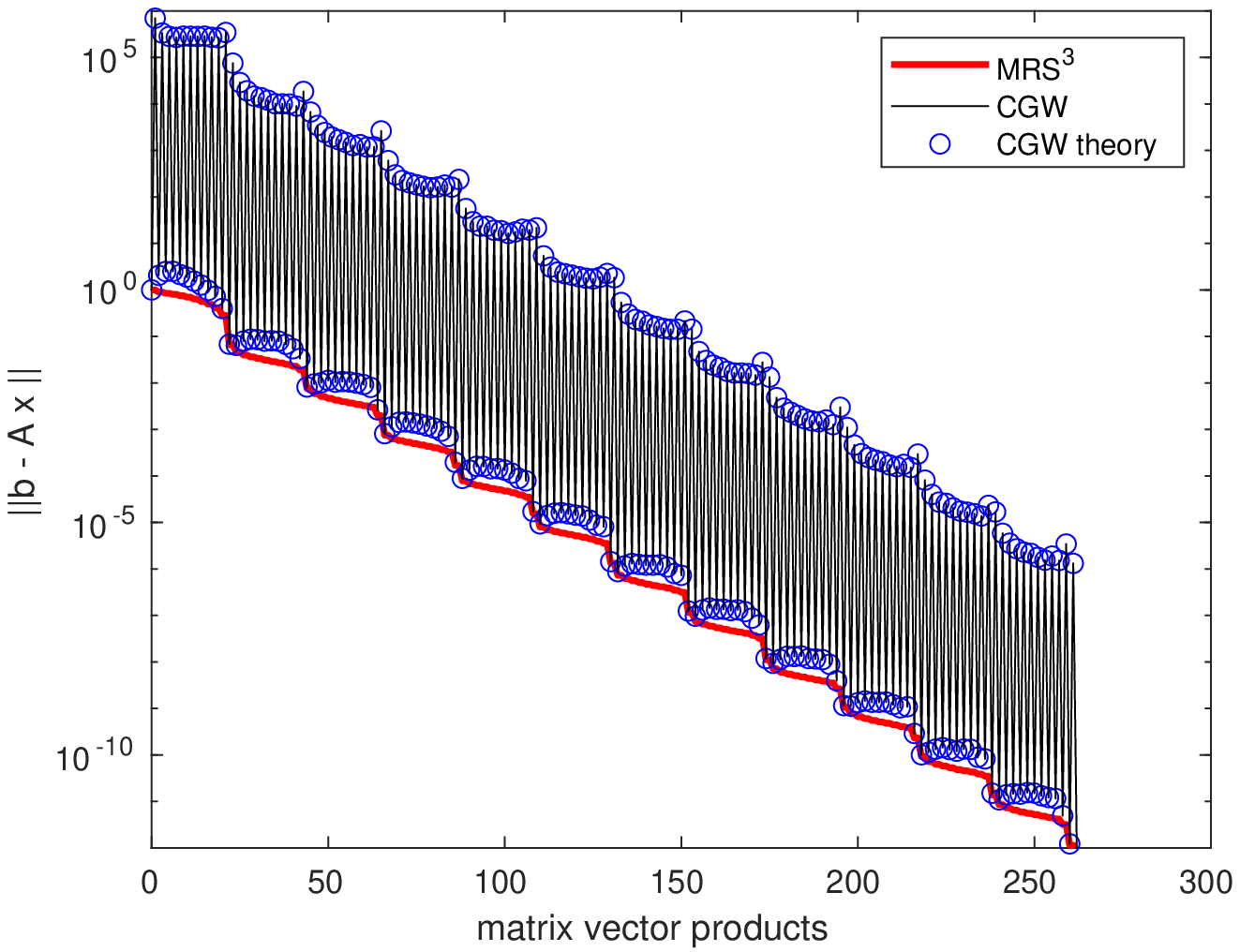}\label{fig:num_comp3a}
}
\subfigure[$\alpha=10^{-3}$, $\gamma=1$, $\kappa=4\cdot 10^3$]
{
\includegraphics[height=5.7cm]{./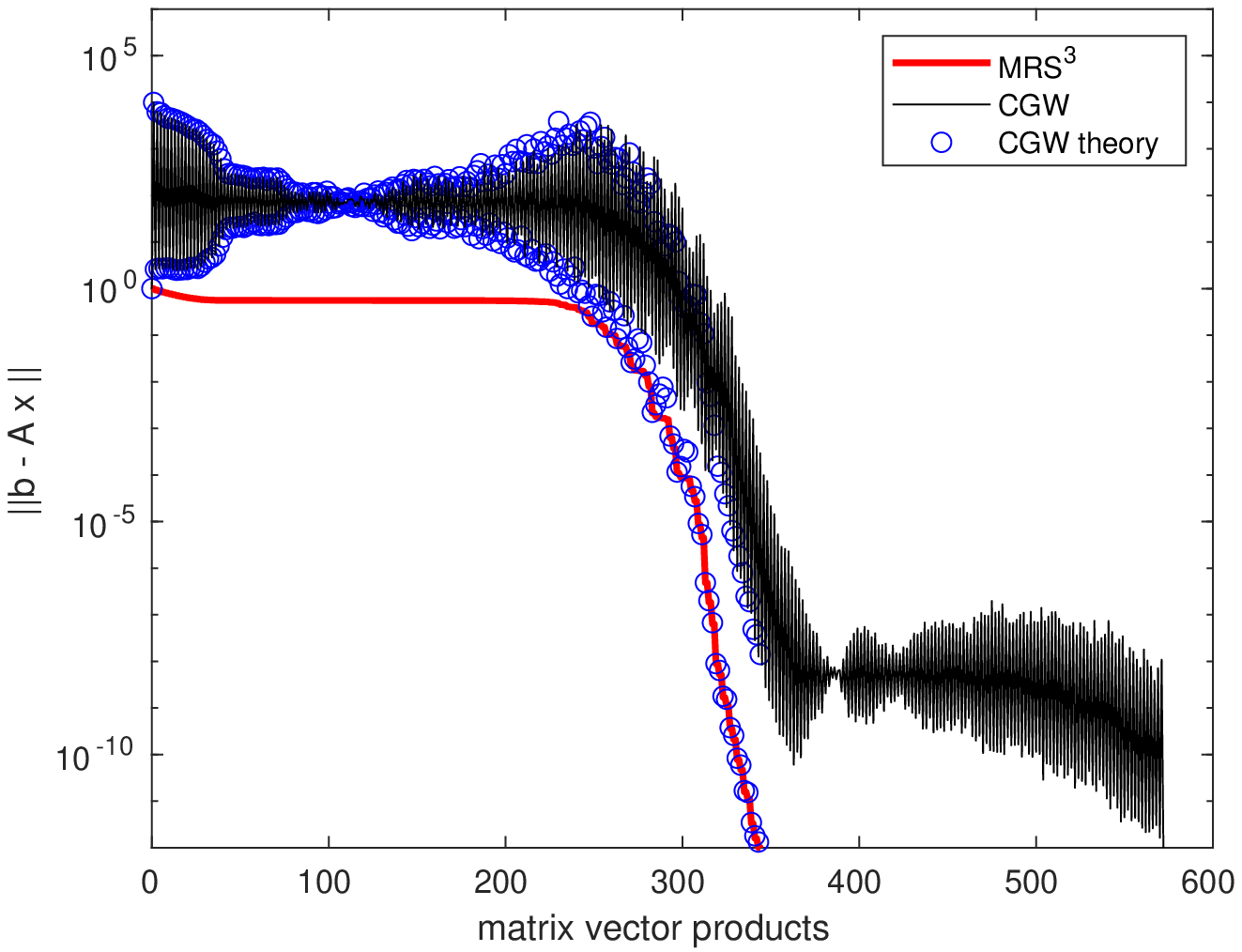}\label{fig:num_comp3b}
}
\caption{Convergence of {\solvername}, and CGW}
\end{figure}

\section{Conclusions}\label{sec:conclusions}

We started this paper by showing the importance of a fast solver
for shifted skew-symmetric matrix systems~(\ref{eq:sss_system}),~(\ref{eq:sss_matrix}).
Theory by Voevodin~\cite{voevodin:nsa_gen_conj_grad} and
Faber and Manteuffel~\cite{faber_manteuffel:existence_cg},~\cite{faber_manteuffel:orth_error_meth}
demonstrates that an algorithm that is optimal and uses short recurrences should exist,
however there was no such algorithm available yet, that works for all values of $\alpha$.
We have presented such an algorithm, the {\solvername} solver.

By theory and numerical experiments,
we have shown that the {\solvername} method generally outperforms its alternatives.
As a minimal residual method it converges faster and is more robust
than Galerkin methods, like the CGW algorithm,
that do not satisfy the optimality property.
At the same time {\solvername} also allows $\alpha=0$, where CGW does not.

Full GMRES converges as fast as {\solvername} but is not a valid option due to its complexity,
whereas restarted GMRES variants have good complexity but cannot maintain the fast convergence.
For the specific problem of SSS systems Bi-CGSTAB seems to converge slowly,
especially if the system is ill-conditioned,
while the complexity is worse than that of {\solvername} too.

Truncated GCR performs really well for large $\alpha$, and rivals the complexity of {\solvername}.
However for small $\alpha$ and $\alpha=0$, the GCR algorithm breaks down.
The performance of the CGNR method is comparable to that of {\solvername} for many problems,
but it breaks down for ill-conditioned system that {\solvername} can still handle.

We conclude that the proposed {\solvername} solver performs very well
for the important class of shifted skew-symmetric matrix systems.
The complexity of the algorithm is very good, it converges very fast,
and it can be used for all values of $\alpha$.
Especially for small $\alpha$, or $\alpha=0$, and for ill-conditioned systems,
{\solvername} performs a lot better than the existing alternatives.

\bibliographystyle{plain}
\bibliography{mrs3paper}
\end{document}